\newtheorem{theorem}{Theorem}[section]
\newtheorem{corollary}[theorem]{Corollary}
\newtheorem{lemma}[theorem]{Lemma}
\newtheorem{definition}[theorem]{Definition}
\newtheorem{proposition}[theorem]{Proposition}
\newtheorem{conjecture}{Conjecture}
\newtheorem{remark}[theorem]{Remark}
\begin{document}

\title[Uniqueness results for the critical catenoid]{Sufficient symmetry conditions for free boundary minimal annuli to be the critical catenoid}
\author{Dong-Hwi Seo}
\address{Research Institute for Natural Sciences, Hanyang University, 222 Wangsimni-ro, Seongdong-gu, Seoul, 04763, Republic of Korea}
\email{donghwi.seo26@gmail.com}

\subjclass[2020]{Primary 53A10; Secondary 58C40, 52A10}
\keywords{minimal surface, Steklov eigenvalue problem, free boundary problem}

\begin{abstract}
    We first consider a uniqueness problem for embedded free boundary minimal annuli in the three-dimensional Euclidean unit half-ball. Then, we obtain symmetry properties for compact embedded free boundary minimal surfaces in the unit ball. Finally, we obtain several uniqueness results for the critical catenoid under symmetry conditions on the boundary. For example, we show that if an embedded free boundary minimal annulus whose boundary consists of two congruent components and has a reflection symmetry by a plane, then it is congruent to the critical catenoid.
\end{abstract}

\maketitle

\section{Introduction}
A free boundary minimal surface $M$ in the unit ball $\mathbb{B}^3\subset \mathbb{R}^3$ is a minimal surface in $\mathbb{B}^3$ that meets $\partial\mathbb{B}^3$ orthogonally along $\partial M$. It is a critical point of the area functional among all surfaces in $\mathbb{B}^3$ whose boundaries lie on $\partial \mathbb{B}^3$. It can be considered as a solution of the Plateau problem with free boundaries as in works of Courant \cite{Cou1938RPD, Cou1940EMS}. This topic was initially studied by Nitsche \cite{Nit85SPC} in 1985 and increasing attention has been paid to the topic after a new construction of free boundary minimal surfaces in a ball via extremal metrics of the normalized first Steklov eigenvalues of compact surfaces with boundary by the work of Fraser and Schoen \cite{FS11FSE, FS16SEB}, Petrides \cite{Pet19MSE}, and Matthiesen and Petrides \cite{MP20arXiv}. For a recent survey on this topic, see \cite{Li20FBM}.

One of the basic examples of free boundary minimal surfaces in $\mathbb{B}^3$ is the critical catenoid \cite{FS11FSE}. It is a portion of an appropriately scaled catenoid. More precisely, we consider the catenoid $\{x_1^2+x_2^2=\cosh^2{x_3}\}$ and all rays from the origin that are tangent to the catenoid. We take the subset of the catenoid that is the bounded component of the relative complement of the rays in the catenoid. Then, the critical catenoid $\Sigma_c$ is obtained by proper scaling so that the bounded component is properly embedded in $\mathbb{B}^3$ (see Figure \ref{fig:construction of the critical catenoid}). $\Sigma_c$ has many interesting properties. For example, it is homothetic to the maximally symmetric marginally stable piece of a catenoid \cite{BB2014VCC}.

\begin{figure}
    \centering
    
        \subfigure[The intersection of $\{x_2=0\}$ and the catenoid $Cat:=\{x_1^2+x_2^2=\cosh^2{x_3}\}$. The dotted lines in the picture are the rays from the origin $o$ that are tangent to the intersection of $\{x_2=0\}$ and the catenoid $Cat$. The intersection consists of the thick curves and the dashed curves in the picture. The thick curves are the bounded components of the subset of $Cat\cap\{x_2=0\}$ that is the relative complement of the dotted rays in $Cat\cap \{x_2=0\}$.]{\label{subfig: catenery}
\begin{tikzpicture}
   \draw[fill=gray!10] (-4.5,3) -- (-4.5,-3) --(4.5,-3) --(4.5,3)--cycle;
   \draw [variable=\y,very thick,dashed, domain=-1.5:1.5] plot ({cosh(\y)}, {\y});
   \draw [variable=\y,very thick,dashed, domain=-1.5:1.5] plot ({-cosh(\y)}, {\y});
   \draw [variable=\y,very thick, domain=-1.19968:1.19968] plot ({cosh(\y)}, {\y});
   \draw [variable=\y,very thick, domain=-1.19968:1.19968] plot ({-cosh(\y)}, {\y});
   \draw[dotted, domain=-1.81017:1.81017] plot ({\x},{1.19968*\x/1.81017});
   \draw[dotted, domain=-1.81017:1.81017] plot ({\x},{-1.19968*\x/1.81017});

    \fill [black] (0,0) circle (1.5pt) node[below]{$o$};
     \fill [black] ({cosh(1.19968)}, {1.19968}) circle (2pt);
     \fill [black] ({cosh(1.19968)}, {-1.19968}) circle (2pt);
     \fill [black] ({-cosh(1.19968)}, {1.19968}) circle (2pt);
     \fill [black] ({-cosh(1.19968)}, {-1.19968}) circle (2pt);
     \node[right] at ({1.3}, {0}){$Cat$};
     \node at (-3.5,2.5){$\{x_2=0\}$};
   
   \draw[->] (3,1.5)--(3, 2.5) node[left]{$x_3$};
   \draw[->] (3, 1.5) -- (4, 1.5) node[below]{$x_1$};
   
\end{tikzpicture}
}
    \subfigure
    [Critical catenoid $\Sigma_c$]
    {\label{subfig: critical catenoid}  
    \includegraphics{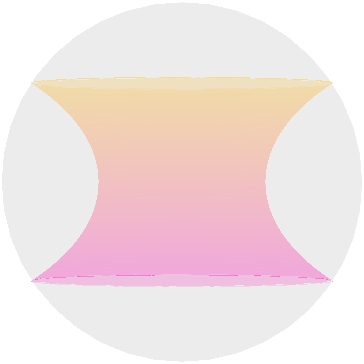}
}

    \caption{Construction of the critical catenoid. First, we take a component of the thick curves in Figure \ref{subfig: catenery}, which is a subset of $Cat$. Then, we obtain the surface of revolution by rotating the thick curve around the $x_3$-axis. If we scale it appropriately, we obtain the critical catenoid $\Sigma_c$ that is properly embedded in the unit ball $\mathbb{B}^3$ as in Figure \ref{subfig: critical catenoid}.}
    \label{fig:construction of the critical catenoid}
\end{figure}

We consider uniqueness problems for free boundary minimal surfaces in a ball in Euclidean space. Nitsche showed that every immersed free boundary minimal disk in $\mathbb{B}^3$ is congruent to the equatorial disk \cite{Nit85SPC}. In the same paper, Nitsche claimed the uniqueness conjecture, which is now well-known by Fraser and Li \cite{FL14CSE}, as follows.
\begin{conjecture}\label{conj: Nitsche}
Every embedded free boundary minimal annulus in $\mathbb{B}^3$ is congruent to the critical catenoid.
\end{conjecture}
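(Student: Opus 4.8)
The plan is to combine the holomorphic rigidity of minimal annuli (through the Hopf differential) with the Steklov structure forced by the free boundary condition, and then to upgrade the resulting catenoidal normalization to genuine rotational symmetry; this last upgrade is where the real difficulty lies.

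\smallskip
\noindent\textbf{Setup and conformal structure.} Let $X:M\to\overline{\mathbb{B}}^3$ parametrize an embedded free boundary minimal annulus. I would uniformize $M$ as a flat cylinder $C_T=[0,T]\times\mathbb{S}^1$ with conformal coordinate $w=s+it$ and induced metric $e^{2\omega}\lvert dw\rvert^2$. Minimality gives $\Delta X=0$, so each coordinate $x_i$ is harmonic, and the orthogonality $\partial_\nu X=X$ along $\partial M$ shows, following Fraser--Schoen, that $x_1,x_2,x_3$ are Steklov eigenfunctions of eigenvalue $\sigma=1$. The aim is to prove (i) that the Hopf differential is catenoidal, and (ii) that $M$ is a surface of revolution; together with Nitsche's ODE analysis of rotationally symmetric free boundary minimal annuli, these identify $X(M)$ with $\Sigma_c$ up to congruence.

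\smallskip
\noindent\textbf{Hopf differential.} The Hopf differential $\Phi=\langle X_{ww},N\rangle\,dw^2$ is holomorphic, and on $C_T$ the space of holomorphic quadratic differentials is spanned by $dw^2$, so $\Phi=c\,dw^2$ for some $c\in\mathbb{C}$. First I would use the free boundary condition to force the conormal $\partial_s X$ to point radially along each end $\{s=0\}$ and $\{s=T\}$; translating this into a reality condition on $\Phi$ at the two boundary circles pins down $c\in\mathbb{R}$, exactly as for the catenoid. Since $M$ is minimal, the full second fundamental form is trace-free and is therefore determined by $\Phi$; the Codazzi equations then hold automatically, and the Gauss equation reduces to the Liouville-type equation
\begin{equation}
\Delta_0\,\omega=\lambda\, e^{-2\omega}\qquad\text{on }C_T,\qquad \lambda>0 \text{ proportional to } c^2,
\end{equation}
with a Neumann-type condition at $\partial C_T$ coming from the free boundary. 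The rotationally symmetric ($t$-independent) solutions of this boundary value problem are precisely the catenoid pieces.

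\smallskip
\noindent\textbf{The main obstacle: symmetry.} The crux is to show that \emph{every} solution of this problem is $t$-independent, i.e.\ that $M$ is a surface of revolution: the catenoidal form of $\Phi$ controls only the trace-free second fundamental form and does not by itself force $\omega$ to be rotationally symmetric. I would attack this by an Alexandrov moving-planes argument in $\mathbb{R}^3$, reflecting across planes through the origin and sliding them; embeddedness starts the method, and the free boundary condition guarantees that the reflected boundary stays on $\partial\mathbb{B}^3$, so the procedure should close to produce a plane of symmetry, which one iterates to an axis. An alternative, in the spirit of the convex-curve results advertised in the abstract, is to show that each component of $\partial M$ is a round circle on $\mathbb{S}^2$—by proving its geodesic curvature is constant via the Steklov condition—and then to deduce rotational invariance of the whole annulus from this boundary rigidity. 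Either way, removing any a priori symmetry assumption is the decisive and hardest step: controlling the full conformal factor $\omega$—equivalently, establishing the Steklov spectral gap $\sigma_1(M)=1$ unconditionally—is the point at which every current argument requires an extra hypothesis, and it is the step I expect to be the main obstacle.
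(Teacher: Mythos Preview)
The statement you are addressing is not a theorem in this paper: it is stated and labeled as a \emph{conjecture}, attributed to Nitsche and Fraser--Li, and the paper makes no claim to prove it. All of the paper's results (Theorems~\ref{thm:half-ball}, \ref{thm:two plane}, \ref{thm:two plane with one component}, \ref{thm:congruent with one symmetry}, etc.) establish uniqueness only under explicit symmetry hypotheses on $\Sigma$ or $\partial\Sigma$. There is therefore no ``paper's own proof'' to compare against.

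Your proposal is an honest outline of how one might attack the full conjecture, and you correctly identify where it breaks: the step labeled ``The main obstacle: symmetry'' is not a proof but a description of the open problem. Both routes you suggest have genuine, well-known gaps. For the Alexandrov moving-planes argument, reflecting across planes through the origin does keep $\partial\Sigma$ on $\mathbb{S}^2$, but the method requires \emph{sliding} a family of planes, and once the plane leaves the origin the reflected boundary leaves the sphere and the free-boundary comparison is lost; no one has made this closure work without a priori symmetry. For the alternative, showing that each boundary component is a round circle would indeed suffice (this is essentially Kapouleas--Li \cite{KL21FBM}), but the Steklov condition $\partial_\nu x_i=x_i$ does not by itself force constant geodesic curvature of $\partial\Sigma$ in $\mathbb{S}^2$; what is known (Lemma~\ref{lemm:byproduct of TPP} here) is only strict convexity. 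Finally, your closing remark that the crux is proving $\sigma_1(M)=1$ unconditionally is exactly right---that is Fraser--Schoen's criterion---but it is equivalent to the conjecture itself, not a step toward it.

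In short: the paper does not prove this statement, and your proposal does not either; the gap is precisely the symmetry/spectral-gap step you flag, and neither of your suggested mechanisms is currently known to close it.
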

We will denote by $\Sigma$ an embedded free boundary minimal annulus in $\mathbb{B}^3$.

Fraser and Schoen showed that if an immersed free boundary minimal annulus in the unit ball $\mathbb{B}^n, n\ge 3$, has the first Steklov eigenvalue 1, then it is congruent to the critical catenoid \cite{FS16SEB}. Using this result, McGrath showed that if $\Sigma$ is invariant under the reflections through three coordinate planes, $\Sigma$ is congruent to the critical catenoid \cite{McG18ACC}. Later, Kusner and McGrath extended this result by replacing the reflections through three coordinate planes with the antipodal map \cite{KM20arXiv}. For other rigidity results, see \cite{FS2020SRH,AN2021GTF,KL21FBM,Tra2021GMF,FHM2022FBM}.

In this paper, we obtain several uniqueness results for the critical catenoid (for full list of theorems, see Section \ref{sec: uniqueness of the critical catenoid}). For example,

\begin{theorem}\label{thm:congruent with one symmetry}If $\partial \Sigma$ consists of two congruent components and is invariant under the reflection through a plane, then $\Sigma$ is congruent to the critical catenoid.
\end{theorem}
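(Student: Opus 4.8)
The plan is to upgrade the two pieces of boundary data into a large symmetry group of $\Sigma$ itself and then invoke the uniqueness results already established. Write $\partial\Sigma=\gamma_1\sqcup\gamma_2$; after relabelling the coordinate axes we may assume $\gamma_1$ is invariant under the reflection $R$ through the plane $\{x_1=0\}$, and we fix some $B\in O(3)$ with $B\gamma_1=\gamma_2$ (such $B$ exists, since any two congruent curves on $S^2$ differ by an element of $O(3)$).

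First I would lift symmetries from the boundary to $\Sigma$. Since $R$ fixes the boundary component $\gamma_1$, the symmetry principle (Theorem \ref{thm:symmetry principle}) shows $\Sigma$ is $R$-invariant; as $R$ cannot interchange the two components, $R\gamma_2=\gamma_2$ as well. Next, $B\Sigma$ is an embedded free boundary minimal annulus sharing the boundary component $\gamma_2=B\gamma_1$ with $\Sigma$, so the unique continuation argument underlying the symmetry principle forces $B\Sigma=\Sigma$; thus $\Sigma$ is $B$-invariant and $B$ interchanges $\gamma_1$ and $\gamma_2$. Let $G\le O(3)$ be the full symmetry group of $\Sigma$, so $R,B\in G$ and, because $B$ swaps the boundary components, the stabilizer $H:=\{g\in G:g\gamma_1=\gamma_1\}$ has index $2$ in $G$. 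The key structural point is that $H$ is small: $H$ preserves the strictly convex spherical curve $\gamma_1$, which (see Section \ref{sec:properties of boundary}) lies in an open hemisphere and hence has a well-defined axis $v_1\in S^2$ fixed by every element of $H$; so $H$ is a closed subgroup of the stabilizer $O(2)$ of $v_1$ containing the reflection $R$, i.e.\ $H$ is either the dihedral group $C_{mv}$ generated by the order-$m$ rotations about $v_1$ and $m$ mirrors through $v_1$, for some finite $m\ge1$, or all of $O(2)$.

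The heart of the proof is the case analysis that follows. If $H=O(2)$, then $H$ contains all rotations about $v_1$, so $\Sigma$ is a surface of revolution and hence equals $\Sigma_c$. If $H=C_{mv}$ with $m$ even, then $C_{mv}$ contains reflections in two mutually perpendicular planes, both fixing $\gamma_1$; conjugating $\Sigma$ by an element of $O(3)$ carrying these to coordinate planes, Theorem \ref{thm:two plane with one component} applies. If $H=C_{mv}$ with $m=1$, then $|G|=4$ and $B^2\in\{e,R\}$; a determinant count rules out $B^2=R$, so $G\cong V_4$, and inspecting the two possibilities for $B$ in $O(3)$ shows that $G$ contains either two reflections in perpendicular planes (each preserving $\partial\Sigma$), so that Theorem \ref{thm:two plane} applies, or the antipodal map, so that Corollary \ref{cor: antipodal symmetry on the boundary} applies. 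Finally, if $H=C_{mv}$ with $m$ odd and $m\ge3$, the order-$m$ rotations in $H$ fix $\gamma_1$, hence $\gamma_2$, hence the axis $v_2$ of $\gamma_2$, so $v_2\in\mathbb{R}v_1$; a maximum principle applied to the harmonic function $\langle\,\cdot\,,v_1\rangle|_\Sigma$ — using the free boundary relation $\partial_\nu\langle x,v_1\rangle=\langle x,v_1\rangle$ along $\partial\Sigma$ together with Hopf's lemma — excludes $v_2=v_1$, so $v_2=-v_1$ and the two boundary circles are coaxial with $B$ reversing the common axis; this forces $G$ to be $D_{mh}$, $D_{md}$, or to contain a one-parameter group of rotations. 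In the $D_{mh}$ case $G$ contains the horizontal and a vertical mirror, which are perpendicular and preserve $\partial\Sigma$ (Theorem \ref{thm:two plane}); in the $D_{md}$ case $D_{md}$ contains $-I$ because $m$ is odd (Corollary \ref{cor: antipodal symmetry on the boundary}); in the last case $\Sigma$ is again a surface of revolution. In every case $\Sigma$ is congruent to the critical catenoid.

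I expect the main obstacle to be the last case, $m$ odd with $m\ge3$: one must combine the geometry of strictly convex spherical curves (to pin the axes $v_1,v_2$ down via the maximum principle) with a careful inspection of which finite subgroups of $O(3)$ can contain $C_{mv}$ with index two, so as to be sure that each such $G$ really does reduce to Theorem \ref{thm:two plane} or to Corollary \ref{cor: antipodal symmetry on the boundary}. A secondary subtlety, already used above, is the identification of $H$ with a dihedral-type subgroup of $O(2)$, which rests on the fact — established earlier in the paper — that the boundary components are strictly convex and therefore lie in open hemispheres.
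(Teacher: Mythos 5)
Your argument is correct, but it takes a genuinely different route from the paper. The paper fixes a single congruence $A$ with $A((\partial\Sigma)_1)=(\partial\Sigma)_2$, factors it as $A=B\circ R_{\{x_3=0\}}$, and then studies the orbit of a point of the equatorial curve $\Sigma\cap\{x_3=0\}$ and of its Gauss map under powers of $A$ (using Lemma \ref{lemm:boundary symmetry} and the Bj\"orling theorem), splitting into cases according to whether the rotation angle of $B$ is irrational, or equals $\tfrac{b}{a}\cdot 2\pi$ with $a$ odd, $a\equiv 2$, or $a\equiv 0 \pmod 4$; each case produces either orthogonal intersection with $\{x_3=0\}$ (so Theorem \ref{thm:half-ball} applies), antipodal invariance, or a second mirror. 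You instead transfer everything to the full symmetry group $G$ of $\Sigma$ via Theorem \ref{thm:symmetry principle} and classify $G$ as an axial point group: the stabilizer $H$ of one component fixes the flux direction and so is $C_{mv}$ or $O(2)$, and the index-two extension by a component-swapping element forces $G$ to be a rotation group, $D_{mh}$, $D_{md}$ (which contains $-I$ for $m$ odd), or a Klein four-group, after which Theorems \ref{thm:two plane}, \ref{thm:two plane with one component} and Corollary \ref{cor: antipodal symmetry on the boundary} finish the job. Your version is more conceptual and avoids the Gauss-map analysis along the equator, at the price of invoking the classification of finite axial subgroups of $O(3)$; the paper's version is more hands-on and also yields Corollary \ref{cor: rotoreflection} as a byproduct. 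A few small points to tighten: the ``axis'' of $\gamma_i$ is cleanest taken to be $f_i$ from the paper, which is manifestly $O(3)$-equivariant; your maximum-principle detour to exclude $v_2=v_1$ is unnecessary, since (\ref{sum_of_flux2}) already gives $f_2=-f_1$; in the $m=1$ case there are three types of involutions for $B$ (plane reflection, half-turn, antipodal map), not two, though all three lead to Theorem \ref{thm:two plane} or Corollary \ref{cor: antipodal symmetry on the boundary}; and the ``one-parameter group'' alternative in the last case is vacuous because $[G:H]=2$ with $H$ finite forces $G$ finite. The assertion that the only finite index-two overgroups of $C_{mv}$ with axis-reversing complement are $D_{mh}$ and $D_{md}$ is standard but deserves a line of verification.
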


To show Theorem \ref{thm:congruent with one symmetry}, we obtain some uniqueness results for the critical catenoid under symmetry conditions. In addition, we obtain symmetry properties for $\Sigma$. Then, observing the Gauss map of $\Sigma$ along a specific curve, we can show that $\Sigma$ is congruent to the critical catenoid. For details, see Section \ref{sec:congruent}.

Now, we consider our two main ingredients of the proof of Theorem \ref{thm:congruent with one symmetry}. First, we obtain the following uniqueness theorem for $\Sigma$ with symmetries by reflection planes (see Section \ref{sec:symmetry by planes}).
\begin{theorem} \label{thm:interior symmetry} If $\Sigma$ has one of the following symmetry conditions, then $\Sigma$ is congruent to the critical catenoid.
\begin{itemize} 
    \item $\Sigma$ is invariant under the reflections through two distinct planes. 
    \item $\Sigma$ is invariant under the reflection through a plane that does not meet $\partial \Sigma$.
\end{itemize}
\end{theorem}
The main idea of the proof of Theorem \ref{thm:interior symmetry} comes from the existence of a coordinate plane (a plane passing through the origin) with the following properties: This plane does not meet $\partial \Sigma$ and every plane perpendicular to this plane intersects $\partial\Sigma$ in at most four points (Proposition \ref{prop:existence of a coordinate plane}). This observation is derived from new geometric properties of strictly convex curves on $\mathbb{S}^2$ (see Section \ref{sec:geometry of curves}) and the concept of the $flux$ for minimal surfaces (see Section \ref{sec:properties of boundary}).

As a corollary, we have a uniqueness result for embedded free boundary minimal annuli in a half-ball as follows.
\begin{corollary}\label{cor:half-ball}
Let $\Sigma'$ be an embedded free boundary  minimal annulus in $\mathbb{B}^3\cap \{x_3\ge 0\}$. If one boundary component is contained in the open hemisphere and the other boundary component is contained in the equatorial disk, then $\Sigma'$ is congruent to the half of the critical catenoid, $\Sigma_c\cap \{x_3\ge0\}$. 
\end{corollary} 

The conclusion of Corollary \ref{cor:half-ball} is not expected to hold without the above assumption on the boundary, see Remark \ref{rmk:boundary condition}.\\

For the second ingredient of the proof of Theorem \ref{thm:congruent with one symmetry}, we obtain two symmetry properties: Lemma \ref{lemm:boundary symmetry} and the following symmetry principle. We will see Lemma \ref{lemm:boundary symmetry} later in Section \ref{sec:symmetry principle}. The symmetry principle implies that a symmetry of the boundary of a compact embedded free boundary minimal surface in $\mathbb{B}^3$ generates a global symmetry.
\begin{theorem}[Symmetry principle]\label{thm:symmetry principle}Let $M$ be a compact embedded free boundary minimal surface in $\mathbb{B}^3$. If $A\in O(3)$ maps a boundary component of $M$ into another boundary component (the two components can be identical), $M$ is invariant under $A$.
\end{theorem}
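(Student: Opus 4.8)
The plan is to use an analytic continuation / unique continuation argument of the type that underlies such symmetry principles. Let $A \in O(3)$ map the boundary component $(\partial M)_1$ onto $(\partial M)_2$ (possibly $(\partial M)_1 = (\partial M)_2$). The surface $A(M)$ is again a compact embedded free boundary minimal surface in $\mathbb{B}^3$, since $A$ is an isometry of $\mathbb{R}^3$ fixing $\mathbb{B}^3$, and it satisfies the same free boundary condition along $\partial(A(M)) = A(\partial M)$. The key point is that $M$ and $A(M)$ share a common boundary curve: $A((\partial M)_1) = (\partial M)_2 \subset \partial M$. I would first argue that two free boundary minimal surfaces in $\mathbb{B}^3$ that contain a common $C^1$ boundary arc on $\partial \mathbb{B}^3$ must coincide near that arc. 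Because both meet $\partial \mathbb{B}^3$ orthogonally, the reflection of the sphere (inversion) across $\partial \mathbb{B}^3$ extends each of them to a minimal surface across $(\partial M)_2$ in the reflected region; alternatively, one can pass to Fermi-type coordinates near the sphere in which the extended surfaces solve the same second-order elliptic minimal surface system. Along $(\partial M)_2$ the two extended surfaces agree to first order (same curve, same conormal direction dictated by orthogonality to $\partial \mathbb{B}^3$), so boundary unique continuation for the minimal surface equation forces $M$ and $A(M)$ to agree in a neighborhood of $(\partial M)_2$ in $\mathbb{B}^3$.

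Once $M$ and $A(M)$ agree on an open subset, I would propagate the coincidence to all of $M$. Since $M$ is connected (an annulus) and minimal — hence real-analytic in its interior — the set where $M$ and $A(M)$ coincide as real-analytic surfaces is both open and closed in $\mathrm{Int}(M)$, so $A(M) = M$ as sets. This is exactly the statement that $M$ is invariant under $A$. In the case $(\partial M)_1 = (\partial M)_2$ (so $A$ maps a boundary component to itself), the same argument applies verbatim, using that $A$ restricted to that component is a nontrivial isometry of the curve and the two surfaces $M$, $A(M)$ still share that whole boundary curve.

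The main obstacle — and the step that needs the most care — is the boundary unique continuation: asserting that two minimal surfaces meeting $\partial \mathbb{B}^3$ orthogonally and sharing a boundary curve must coincide near it. One must set this up cleanly: write both surfaces as graphs over a common reference (e.g. over the tangent plane along the shared curve, or better, in boundary normal coordinates for the sphere), reflect across $\partial \mathbb{B}^3$ so that the orthogonality condition becomes a smooth-extension condition, verify that the reflected surfaces are genuine minimal surfaces (the doubled surface across a free boundary is minimal — this is classical, cf. the Schwarz reflection for free boundary minimal surfaces), and then invoke that a solution of a second-order elliptic equation vanishing to infinite order — here, to first order suffices by analyticity after the doubling — on an open subset is identically zero. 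I would also need to handle regularity of $\partial M$ up to and including the boundary, which follows from standard free boundary regularity theory for minimal surfaces, so that the shared curve is genuinely smooth and the conormal is well defined. After that, the topological open–closed propagation and the reduction to "$A(M)=M$" is routine.
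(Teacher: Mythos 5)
Your proposal is correct and follows essentially the same route as the paper: both arguments reduce the theorem to the observation that the shared boundary curve together with orthogonality to $\partial\mathbb{B}^3$ determines the Bj\"orling data (curve plus unit normal) along $A((\partial M)_1)=(\partial M)_2$, so that $M$ and $A(M)$ coincide near that curve by Bj\"orling uniqueness, and then propagate the coincidence over the connected surface by real-analytic continuation (the paper covers $M$ by Bj\"orling solutions and uses the analytic extension across $\mathbb{S}^2$ exactly where you run your open--closed argument on $\mathrm{Int}(M)$). One small correction: inversion in $\partial\mathbb{B}^3$ does not carry minimal surfaces to minimal surfaces, so the ``reflect and double'' version of your local step should be replaced by Lewy's analytic extension theorem for minimal surfaces meeting a sphere orthogonally (which the paper cites) or simply by the Bj\"orling-uniqueness argument you also describe.
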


Together with this theorem and the result by Kusner and McGrath \cite{KM20arXiv}, which is previously mentioned, we obtain the following corollary.

\begin{corollary}\label{cor: antipodal symmetry on the boundary}
If $\partial \Sigma$ is invariant under the antipodal map, then $\Sigma$ is congruent to the critical catenoid.
\end{corollary}

In \cite{KL21FBM}, Kapouleas and Li showed that if $\Sigma$ has a rotationally symmetric boundary component, $\Sigma$ is congruent to the critical catenoid. Theorem \ref{thm:interior symmetry} and the symmetry principle give the following.
\begin{corollary} \label{cor:two plane with one component}If a component of $\partial \Sigma$ is invariant under the reflections through two distinct planes, then $\Sigma$ is congruent to the critical catenoid. 
\end{corollary}

In Section \ref{sec:background}, we review some of the standard facts on the Steklov eigenvalue problem. In Section \ref{sec:geometry of curves}, we prove some properties of a strictly convex curve on the 2-sphere which will be used in the next section. In Section \ref{sec:properties of boundary}, we prove several properties of $\partial\Sigma$. In Section \ref{sec:symmetry by planes}, we consider symmetry conditions by reflection planes for uniqueness theorems. In Section \ref{sec:symmetry principle}, we prove symmetry properties for a compact embedded free boundary  minimal surface in $\mathbb{B}^3$. In Section \ref{sec:congruent}, we prove Theorem \ref{thm:congruent with one symmetry}. Finally, we sum up our uniqueness results for the critical catenoid in Section \ref{sec: uniqueness of the critical catenoid}.

\section*{Acknowledgements} 
 
 The author would like to express his gratitude to Jaigyoung Choe, Kyeongsu Choi, Jaehoon Lee, Pablo Mira, Juncheol Pyo, and Eungbeom Yeon for valuable comments. The author also thanks José A. Gálvez for providing Lemma \ref{lemm: cone condition} that makes the proof of Proposition \ref{prop:good Gauss map} simpler than the previous proof.  The work was supported by the National Research Foundation of Korea (NRF) grant funded by the Korea Government (MSIT) (No. 2021R1C1C2005144).

\section{Background: Steklov eigenvalue problem}\label{sec:background}
Let $\Omega$ be a bounded domain with smooth boundary $\partial \Omega \neq \emptyset$ in a Riemannian manifold. Then, the Steklov eigenvalue problem, which is introduced by Steklov in 1902 \cite{Ste1902problemes}, is to find $\sigma\in \mathbb{R}$ for which there exists a function $u\in C^{\infty}(\Omega)$ satisfying
\begin{align} \label{problem}
\left\{
\begin{array}{rcll}
     \Delta u &=& 0&   \text{in   } \Omega  \\
     \frac{\partial u}{\partial \eta} &=& \sigma u &\text{on   } \partial \Omega
\end{array} \right.
,
\end{align}
where $\eta$ is the outward unit conormal vector of $\Omega$ along $\partial \Omega$. It is known that the eigenvalues $\sigma$ of this problem are discrete and form a sequence, $0=\sigma_0<\sigma_1\le \sigma_2\le\cdots\rightarrow \infty$ (see, for example, \cite{GP17SGS}). Note that constant functions are Steklov eigenfunctions with eigenvalue $\sigma_0=0$. In addition, for $i\ge 0$, we have
\begin{align} \label{variational characterization}
    \sigma_{i+1}(\Omega)= \inf_{f\in C^{\infty}(\partial \Omega)\setminus\{0\}}\left\{ \left.\frac{\int_{\Omega}|\nabla \hat{f}|^2}{\int_{\partial \Omega}f^2} \right| \int_{\partial \Omega}fu_k=0 \text{ for } k=0,\dots, i \right\},
\end{align}
where $\hat{f}\in C^{\infty}(\bar{\Omega})$ is the harmonic extension of $f$ and $u_k$ is a Steklov eigenfunction corresponding to $\sigma_k$.  

If $\Omega$ is a compact immersed free boundary minimal surface in $\mathbb{B}^3$, we have the following basic properties of Steklov eigenfunctions.
\begin{lemma} \label{lemm:Steklov eigenfunction}Let $M$ be a compact immersed free boundary minimal surface in $\mathbb{B}^3$. Then:
\begin{enumerate}[label=\arabic*)]
\item Any coordinate functions, $x_i, i=1,2,3$, are Steklov eigenfunctions of $M$ with eigenvalue 1. In particular, $\sigma_1(M)\le 1$. \label{lemm: eigenvalue 1}
\item Let $u$ be a first Steklov eigenfunction of $M$. Then, we have 
\begin{align}
    \int_{\partial M} u=0.
\end{align}
If $\sigma_1(M)<1$, we have 
\begin{align}
    \int_{\partial M} ux_i =0 \text{ for all }i=1,2,3.
\end{align} \label{lemm:orthogonality}
\end{enumerate}
\end{lemma}
\begin{proof}
\ref{lemm: eigenvalue 1} follows from \cite[Lemma 2.2]{FS11FSE} or \cite[Theorem 2.2]{Li20FBM}. Together with (\ref{variational characterization}), we obtain \ref{lemm:orthogonality}.
\end{proof}

\section{Geometry of curves on the two-sphere}\label{sec:geometry of curves}
In this section, we will study properties of strictly convex curves on $\mathbb{S}^2$.

\begin{definition}\label{def:strictly convex} A simple closed curve on $\mathbb{S}^2$ is said to be strictly convex if it intersects every great circle in $\mathbb{S}^2$ in at most two points.
\end{definition}

\begin{definition}
We define a function $F$ from a closed curve on $\mathbb{S}^2$ into $\mathbb{R}^3$ by
\begin{align}
    F(\mathcal{C})=\int_{\mathcal{C}}x,
\end{align}
where $\mathcal{C}$ is a closed curve on $\mathbb{S}^2$ and $x$ is the position vector of $\mathbb{R}^3$.
\end{definition}

\begin{proposition}\label{claim:flux}
A strictly convex closed curve $\mathcal{C}$ on $\mathbb{S}^2$ does not meet the coordinate plane perpendicular to $F(\mathcal{C})$.
\end{proposition}
\begin{proof}
Suppose it does. Then, there is a point $p\in \mathcal{C}$ such that $p$ is not contained in the open hemisphere centered at $F(\mathcal{C})$ (see Figure \ref{fig:Proposition 1(setting)}). 
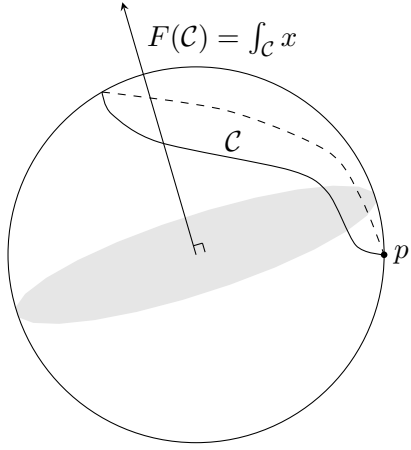
\begin{figure}
    \centering
    \begin{tikzpicture}[scale=2.5]
   \fill[domain=0:360, gray!20] plot ({cos(\x)*24/25-sin(\x)/4*7/25}, {cos(\x)*7/25-sin(\x)/4*24/25});

   \draw[-stealth ] (0,0)--({-7/25*1.4}, {24/25*1.4});
     \draw (0,0) circle (1cm);
     \draw plot [smooth] coordinates {(1,0)  (0.85,0.05) (0.7, {sqrt(11)/10}) (0.5,0.45) (-0.2, 0.6) (-0.45, 0.75) ({-1/2},{sqrt(3)/2})};

     \node[above] at (0.2, 0.5) {$\mathcal{C}$};

     \draw[dashed] plot [smooth] coordinates {({-1/2},{sqrt(3)/2}) (0.16, 0.7683) (0.7,0.55) ({0.09*sqrt(3)/2+0.8}, 0.3) (1,0)};

    \draw ({-7/25*0.05}, {24/25*0.05})--({-7/25*0.05+24/25*0.05}, {24/25*0.05+7/25*0.05});
    \draw ({24/25*0.05}, {7/25*0.05})--({-7/25*0.05+24/25*0.05}, {24/25*0.05+7/25*0.05});

     \node[right] at  ({-7/25*1.3+0.05}, {24/25*1.3-0.1}) {$F(\mathcal{C})=\int_{\mathcal{C}}x$};
     \fill [black] (1, 0) circle (0.5pt) node[right]{$p$};
\end{tikzpicture}
    \caption{Setting for the proof of Proposition \ref{claim:flux}. Let $\mathcal{C}$ be a closed strictly convex curve on $\mathbb{S}^2$. In the proof, we assume there is a point $p\in \mathcal{C}$ such that $p$ is not contained in the open hemisphere centered at $F(\mathcal{C}):=\int_{\mathcal{C}}x$, where $x$ is the position vector of $\mathbb{R}^3$.}
    \label{fig:Proposition 1(setting)}
\end{figure}
Let $p=(0,1,0), H_1=\{x_2<0\}\cap \mathbb{S}^2$, and $H_2=\{x_2>0\}\cap \mathbb{S}^2$. Then, by assumption, we have 
\begin{align}\label{assumption in proposition 1}
F(\mathcal{C})\in \bar{H}_1.    
\end{align}
We show that 
\begin{align}\label{dividing by length}
    L(\mathcal{C}\cap \bar{H}_1)<L(\mathcal{C}\cap H_2),
\end{align}
where $L(\mathcal{C}')$ is the length of a curve $\mathcal{C}'$ in $\mathbb{S}^2$. Since $\mathcal{C}$ is a strictly convex curve on $\mathbb{S}^2$, $\mathcal{C}\cap \{x_2=0\}$ is at most two points. If $\mathcal{C}\cap \{x_2=0\}$ is either the empty set or a point, $L(\mathcal{C}\cap \bar{H}_1)=0<L(\mathcal{C})=L(\mathcal{C}\cap H_2)$. Otherwise, $L(\mathcal{C}\cap H_2)\ge \pi$, which is the length of a great semicircle. Since $L(\mathcal{C})<2\pi$ (see \cite[Lemma 4]{HMOS08CCS}), we obtain (\ref{dividing by length}).

Using (\ref{dividing by length}), $\mathcal{C}$ is divided by $\mathcal{C}_1:= \mathcal{C}\cap \bar{H}_1$, $\mathcal{C}_2:= \mathcal{C}\cap \{x_2>c_1\}$ for some $c_1>0$ such that $L(\mathcal{C}_1)=L(\mathcal{C}_2)$, and $\mathcal{C}_3:= \mathcal{C}-(\mathcal{C}_1\cup \mathcal{C}_2)$. Then we construct subsets $\mathcal{C}_1'$ and $\mathcal{C}_2'$ of the great circle $G=\{x_3=0\}\cap \mathbb{S}^2$ by 
\begin{align}
    \mathcal{C}_1':= G\cap \{c_2<x_2\le 0\} \text{ for some } c_2<0 \text{ such that } L(\mathcal{C}_1')=L(\mathcal{C}_1)
\end{align}
and 
\begin{align}
    \mathcal{C}_2':= G\cap \{x_2>c_3\} \text{ for some } c_3>0 \text{ such that } L(\mathcal{C}_2')=L(\mathcal{C}_2).
\end{align}
See Figure \ref{fig:proposition 2 (constructions)} for the definitions of $\mathcal{C}_i, i=1,2,3,$ and $\mathcal{C}_j', j=1,2$. Note that $L(\mathcal{C}_1)=L(\mathcal{C}_1')= L(\mathcal{C}_2)=L(\mathcal{C}_2')$.
Then, we have $L(\{x_2>c\}\cap \mathcal{C}_2) \ge L(\{x_2>c\}\cap \mathcal{C}_2')$ for all $c\ge 0$. Thus, by the Fubini theorem, we have
\begin{align}
    \int_{\mathcal{C}_2}x_2 = \int_{0}^{\infty} L(\{x_2>c\} \cap \mathcal{C}_2) dc \ge \int^{\infty}_0 L(\{x_2>c\}\cap \mathcal{C}_2') dc = \int_{\mathcal{C}_2'}x_2.
\end{align}
Likewise, we have
\begin{align}
    \int_{\mathcal{C}_1}x_2 \ge \int_{\mathcal{C}_1'}x_2.
\end{align}
Thus,
\begin{align}
    \int_{\mathcal{C}} x_2  \ge \left(\int_{\mathcal{C}_1'}x_2  +\int_{\mathcal{C}_2'}x_2\right)+\int_{\mathcal{C}_3}x_2> 0,
\end{align}
contrary to (\ref{assumption in  proposition 1}).

\begin{figure}
    \centering
    \subfigure[$\mathcal{C}_1, \mathcal{C}_2$, and $\mathcal{C}_3$.]{
    \begin{tikzpicture}[scale=2.5]
   \fill[domain=0:360, gray!20] plot ({cos(\x)/4}, {sin(\x)});
     \fill[domain=90:270, gray!20] plot ({0.12*cos(\x)+0.8}, {3*sin(\x)/5});
     \fill[domain=-90:90, gray!20] plot ({0.09*cos(\x)+0.8}, {3*sin(\x)/5});
   
     \draw (0,0) circle (1cm);
     \draw plot [smooth] coordinates {(1,0)  (0.85,0.05) (0.7, {sqrt(11)/10}) (0.5,0.45) (-0.2, 0.6) (-0.45, 0.75) ({-1/2},{sqrt(3)/2})};
     
     \node[below] at (-0.45, 0.75) {$\mathcal{C}_1$};
     \node[below] at (0.2, 0.5) {$\mathcal{C}_3$};
     \node[left] at (0.85,0.05) {$\mathcal{C}_2$};

     \draw[dashed] plot [smooth] coordinates {({-1/2},{sqrt(3)/2}) (0.16, 0.7683) (0.7,0.55) ({0.09*sqrt(3)/2+0.8}, 0.3) (1,0)};
     
      \fill [black] (-0.2,0.6) circle (0.5pt);
     \fill [black] (0.16, 0.7683) circle (0.5pt);
     \draw[black] (0.7, {sqrt(11)/10}) circle (0.5pt);
     \draw[black] ({0.09*sqrt(3)/2+0.8}, 0.3) circle (0.5pt);
     \fill[black] (1,0) circle (0.5pt) node[right] {$p=(0,1,0)$};

     \node[above] at (0,1) {$\{x_2=0\}$};
     \node[right] at (0.77, 0.65) {$\{x_2=c_1\}$};
     
     \draw[->] (-1.5+3.5,0.8)--(-1+3.5,0.8);
     \node[below] at (-1+3.5, 0.8) {$x_2$};
     \draw[->] (-1.5+3.5,0.8)--(-1.5+3.5,1.3);
     \node[left] at (-1.5+3.5, 1.3) {$x_3$};
     \draw[->] (-1.5+3.5,0.8)--({-1.5-0.5/sqrt(2)+0.05+3.5},{0.8-0.5/sqrt(2)+0.05});
     \node at ({-1.5-0.5/sqrt(2)-0.05+3.5},{0.8-0.5/sqrt(2)-0.05}) {$x_1$};
\end{tikzpicture}
    }
    
    \subfigure[$\mathcal{C}_1'$ and $\mathcal{C}_2'$.]{
    \begin{tikzpicture}[scale=2.5]
   \fill[domain=0:360, gray!20] plot ({cos(\x)/4}, {sin(\x)});
     \fill[domain=0:360, gray!20] plot ({0.15*cos(\x)+0.6}, {0.8*sin(\x)});
     \fill[domain=0:360, gray!20] plot ({0.15*cos(\x)-0.6}, {0.8*sin(\x)});

     \draw (0,0) circle (1cm);
     \draw[dotted] plot [smooth] coordinates {(1,0)  (0.85,0.05) (0.7, {sqrt(11)/10}) (0.5,0.45) (-0.2, 0.6) (-0.45, 0.75) ({-1/2},{sqrt(3)/2})};
     
     \draw[domain=-62.87639:0] plot ({cos(\x)}, {sin(\x)/4});
     \draw[domain=0:41.6924, dashed] plot ({cos(\x)}, {sin(\x)/4});
     \draw[domain=75.9627:{180-62.87639}, dashed] plot ({cos(\x)}, {sin(\x)/4});
    \draw[domain={180+41.6924}:255.9627] plot ({cos(\x)}, {sin(\x)/4});
     \node[above] at (0.2, 0.5) {$\mathcal{C}$};

     \draw[dotted] plot [smooth] coordinates {({-1/2},{sqrt(3)/2}) (0.16, 0.7683) (0.7,0.55) ({0.09*sqrt(3)/2+0.8}, 0.3) (1,0)};
     
      \fill [black] (-0.2,0.6) circle (0.5pt);
     \fill [black] (0.16, 0.7683) circle (0.5pt);
     \draw[black] (0.7, {sqrt(11)/10}) circle (0.5pt);
     \draw[black] ({0.09*sqrt(3)/2+0.8}, 0.3) circle (0.5pt);
     \fill[black] (1,0) circle (0.5pt) node[right] {$p=(0,1,0)$};
     \draw[black] ({cos(-62.87639)}, {sin(-62.87639)/4}) circle (0.5pt);
    \draw[black] ({cos(41.6924)}, {sin(41.6924)/4}) circle (0.5pt);
    \fill [black] ({cos(75.9627)}, {sin(75.9627)/4}) circle (0.5pt);
    \fill [black] ({cos(255.9627)}, {sin(255.9627)/4}) circle (0.5pt);
    \draw[black] ({cos(180-62.87639)}, {sin(180-62.87639)/4}) circle (0.5pt);
    \draw[black] ({cos(180+41.6924)}, {sin(180+41.6924)/4}) circle (0.5pt);
    
    \draw[->] ({-0.09}, {0.35*sin(96.543155)/4})--({cos(96.543155)}, {sin(96.543155)/4});
    \draw[->] ({0.3*cos(238.82755)}, {0.3*sin(238.82755)/4})--({cos(238.82755)}, {sin(238.82755)/4});
     
     \node[above] at (0,1) {$\{x_2=0\}$};
     \node[right] at (0.6, 0.9) {$\{x_2=c_3\}$};
     \node[left] at (-0.6,0.9) {$\{x_2=c_2\}$};
     \node[above] at ({cos(-40)}, {sin(-40)/4})  {$\mathcal{C}_2'$};
     \node at (-0.08, 0) {$\mathcal{C}_1'$};

      \draw[->] (-1.5+3.5,0.8)--(-1+3.5,0.8);
     \node[below] at (-1+3.5, 0.8) {$x_2$};
     \draw[->] (-1.5+3.5,0.8)--(-1.5+3.5,1.3);
     \node[left] at (-1.5+3.5, 1.3) {$x_3$};
     \draw[->] (-1.5+3.5,0.8)--({-1.5-0.5/sqrt(2)+0.05+3.5},{0.8-0.5/sqrt(2)+0.05});
     \node at ({-1.5-0.5/sqrt(2)-0.05+3.5},{0.8-0.5/sqrt(2)-0.05}) {$x_1$};
    
\end{tikzpicture}
    
    }
    
    \caption{Definitions of $\mathcal{C}_i, i=1,2,3,$ and $\mathcal{C}_j', j=1,2$. Let $\mathcal{C}$ be a closed strictly convex curve on $\mathbb{S}^2$ and assume $p=(0,1,0)\in \mathcal{C}$. Shaded regions in the pictures are intersections of $\mathbb{B}^3$ and planes $\{x_2=c\}$ for $c\in \mathbb{R}$. (a) $\mathcal{C}_i, i=1,2,3,$ are subsets of $\mathcal{C}$. Let $\mathcal{C}_1=\mathcal{C}\cap\{x_2\le 0\}$. $\mathcal{C}_2$ is defined by $\mathcal{C}\cap\{x_2> c_1\}$ such that $L(\mathcal{C}_1)=L(\mathcal{C}_2)$. Then, let $\mathcal{C}_3=\mathcal{C}-(\mathcal{C}_1\cup \mathcal{C}_2)$. (b) $\mathcal{C}_j', j=1,2,$ are subsets of the great circle $G$ in $\{x_3=0\}$. $\mathcal{C}_1'$ is defined by $G\cap\{c_2<x_2\le 0\}$ such that $L(\mathcal{C}_1')=L(\mathcal{C}_1)$. $\mathcal{C}_2'$ is defined by $G\cap \{x_2>c_3\}$ such that $L(\mathcal{C}_2')=L(\mathcal{C}_2)$.}
    \label{fig:proposition 2 (constructions)}
\end{figure}
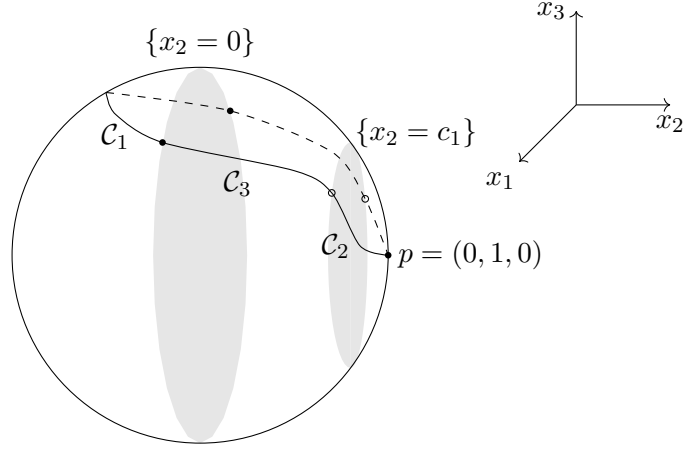
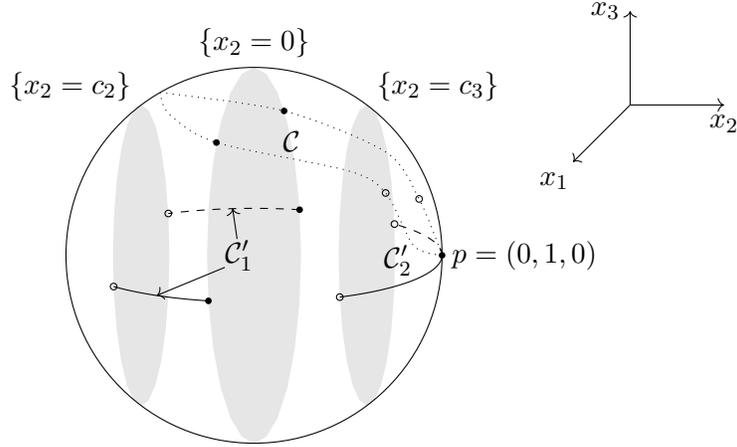
\end{proof}

\begin{remark}
Proposition \ref{claim:flux} provides a constructive proof of the existence of a great circle that does not meet a given strictly convex curve on $\mathbb{S}^2$ (see \cite[Lemma 4 and Theorem 2]{HMOS08CCS} for an existence proof via integral geometry).
\end{remark}

The following argument and Proposition \ref{prop:interior} are well-known, but we give a proof for the convenience of the reader. A strictly convex spherical set $D$ is a closed subset of $\mathbb{S}^2$ such that it lies in an open hemisphere and the shortest geodesic segments between any two distinct points in $D$ are contained in $D$ (see, for instance, Santal\'{o} \cite{San1946CRN}). By Proposition \ref{claim:flux}, any strictly convex curve $\mathcal{C}$ on $\mathbb{S}^2$ bounds the exactly one component of $\mathbb{S}^2\setminus \mathcal{C}$ that is contained in an open hemisphere. Then, we can show that the closure of such component in $\mathbb{S}^2$, say $\mathcal{D}$, is a strictly convex spherical set. Indeed, if it is not true, we can find $p,q\in \mathcal{C}$ and a great circle $G$ connecting $p$ and $q$ such that any point in the shortest arc in $G$ connecting $p$ and $q$ does not contained in $\text{Int}(\mathcal{D})$. Note that the length of the arc is less than $\pi$ because $\mathcal{C}$ is contained in an open hemisphere by Proposition \ref{claim:flux}. Since $G\cap \mathcal{C}=\{p, q\}$, $\mathcal{D}$ contains the longest arc connecting $p$ and $q$ in $G$, so it cannot be contained in any closed hemispheres, a contradiction. Thus, we showed the first statement in the following proposition.

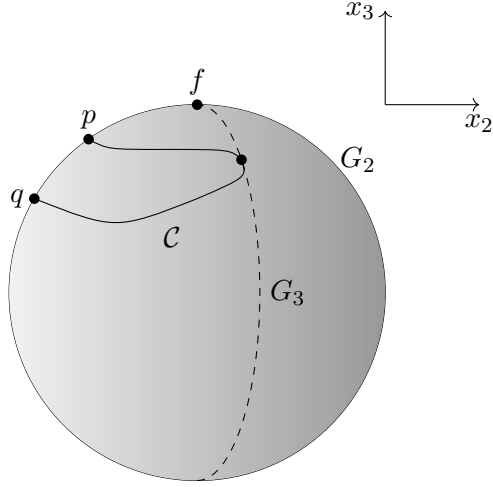
\begin{figure}
    \centering
    \begin{tikzpicture}[scale=2.5]

     \draw (0,0) circle (1cm);
      \shade[left color=gray!10,right color=gray!80] (0,0) circle (1cm);
      
      \draw[dashed] (0,-1) arc(-90:90:{1/3} and 1);
      
      \draw ({-1/sqrt(3)}, {sqrt(2)/sqrt(3)}).. controls ({(-1/sqrt(3))+0.1}, {(sqrt(2)/sqrt(3)+1/sqrt(2))/2}) .. ({(-1/sqrt(3)+1/sqrt(18))/2}, {(sqrt(2)/sqrt(3)+1/sqrt(2))/2}).. controls ({(-1/sqrt(3)+1/sqrt(18))/2+0.35}, {(sqrt(2)/sqrt(3)+1/sqrt(2))/2})..
      ({1/sqrt(18)}, {1/sqrt(2)}).. controls ({1.13/sqrt(18)},{0.87/sqrt(2)})..(0,{1/2})..controls ({-sqrt(3)/4},{0.33}).. ({-sqrt(3)/2},{1/2}) ;

    \fill [black] (0,1) circle (0.8 pt) node[above]{$f$};
    \fill [black] ({1/sqrt(18)}, {1/sqrt(2)}) circle ( 0.8pt);
    \fill[black] ({-1/sqrt(3)},{sqrt(2)/sqrt(3)}) circle (0.8 pt) node[above]{$p$};
    \fill[black] ({-sqrt(3)/2},{1/2}) circle (0.8 pt) node[left]{$q$};
    \node[below]at ({-sqrt(3)/4+0.3},{0.4}) {$\mathcal{C}$};
    \node [right] at ({1/3},0) {$G_3$};
    \node [right] at ({1/sqrt(2)}, {1/sqrt(2)}) {$G_2$};
    
    \draw[->] (1,1)--(1, 1.5) node[left]{$x_3$};
   \draw[->] (1, 1) -- (1.5, 1) node[below]{$x_2$};
     
\end{tikzpicture}
    \caption{The existence of a great circle $G_3$ that intersects a component of $\mathcal{C}\setminus G_2$ at exactly one point in the proof of Proposition \ref{prop:interior}. In the proof, we assumed $f$ does not lie in the geodesic segment of $\mathbb{S}^2$ connecting $p, q\in \mathcal{C}$. We take the great circle $G_2$ that passes through $f, p$, and $q$. A component of $\mathbb{S}^2\setminus G_2$ is represented by the gray region in the picture. $\mathcal{C}\setminus G_2$ has two components by the strictly convexity of $\mathcal{C}$. Note that $\mathcal{C}\cap G_2=\{p,q\}$ and $p$ and $q$ are contained in the open hemisphere centered at $f$ (Proposition \ref{claim:flux}). Then, we can take a great circle $G_3$ that intersects a component $\mathcal{C}\setminus G_2$ at exactly one point.}
    \label{fig:Proposition 2}
\end{figure}

\begin{proposition} \label{prop:interior}
A strictly convex closed curve $\mathcal{C}$ bounds a strictly convex spherical set $\mathcal{D}$ and
\begin{align}\label{f}
    f:= \frac{F(\mathcal{C})}{\lVert F(\mathcal{C})\rVert} \in \text{Int}(\mathcal{D}).
\end{align}
\end{proposition}
\begin{proof}
Suppose $f \in \mathcal{C}$. By the strictly convexity of $\mathcal{C}$, we can find a great circle $G_1$ passing through $f$ such that $\mathcal{C}$ is contained in the one of the closed hemispheres with boundary $G_1$. Let $G_1:= \{v \cdot x=0\}\cap \mathbb{S}^2$ for some nonzero vector $v\in \mathbb{R}^3$, and we may assume $\mathcal{C}$ is contained in $\{v\cdot x\ge 0\}$. Note that $\mathcal{C}\neq G_1$. Then, $v\cdot \int_{(\partial\Sigma)_i}x ds>0$, so $f$ is contained in $\{v\cdot x> 0\}$. This contradicts $f\in G_1$. Thus, $f \notin \mathcal{C}$.

We assert that if there is a geodesic segment of $\mathbb{S}^2$ connecting $f$ and two points $p, q\in\mathcal{C}$, then it has endpoints $p$ and $q$. Suppose not. We may assume the endpoints are $f$ and $q$. By the strictly convexity of $\mathcal{C}$, the great circle $G_2$ passing through $f, p,$ and $q$, dissects $\mathcal{C}$ into two components. We may assume $f=(0,0,1)$, $G_2=\{x_1=0\}\cap \mathbb{S}^2$, and $p\in G_2 \cap \{x_2<0\}$. By Proposition \ref{claim:flux}, $\mathcal{C}\subset \{x_3>0\}$. Then, $p, q\in \{x_2<0\}\cap \{x_3>0\}\cap G_2$. In addition, by the strictly convexity of $\mathcal{C}$, $\mathcal{C}$ does not meet $G_2\cap \{x_2>0\}$. Thus, for a fixed component of $\mathcal{C}\setminus G_2$, there exists a great circle $G_3$ passing through $f$ such that they meet at exactly one point (See Figure \ref{fig:Proposition 2}). Furthermore, such circumstance occurs if and only if they meet at the point tangentially. If $\mathcal{C}$ is contained in the one of closed hemispheres with boundary $G_3$, we can obtain a contradiction in a similar way to the previous paragraph. Otherwise, $G_3$ intersects $\mathcal{C}$ in at least three points. This also contradicts Lemma \ref{lemm:byproduct of TPP}. Thus, we proved the assertion, and it clearly implies that $f\in \text{Int}(\mathcal{D})$.

\end{proof}

\begin{figure}
    \centering
    \includegraphics[width=\linewidth]{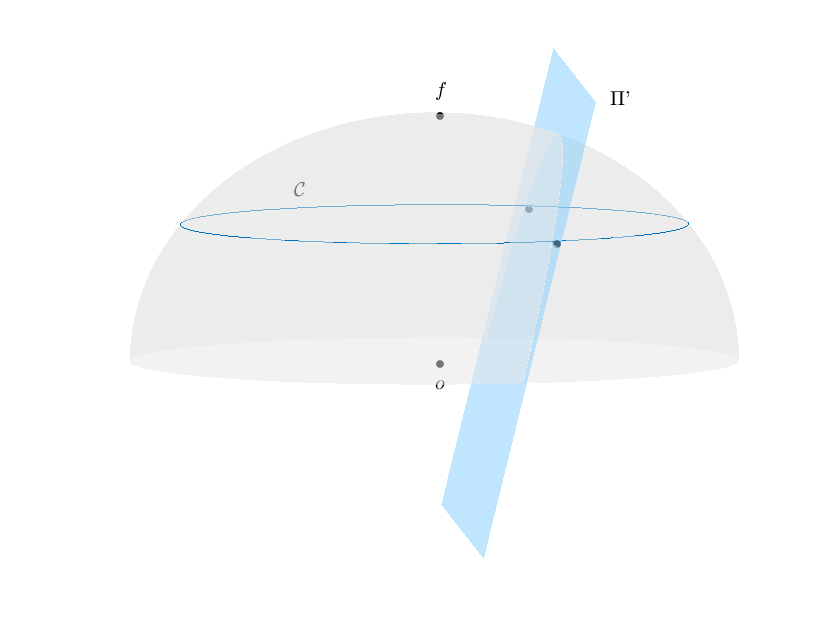}
    \caption{An illustration of Proposition $\ref{prop:at_most_two}$. We consider the open hemisphere centered at $f$ and a strictly convex curve $\mathcal{C}$. Let $\Pi'$ be a plane that bounds a strictly convex spherical cap of $\mathbb{S}^2$ whose center lies in the complement of the open hemisphere in $\mathbb{S}^2$. Note that $f$ and the origin $o$ lie on the same side of $\Pi'$. Then, Proposition \ref{prop:at_most_two} says $\Pi'$ intersects $\mathcal{C}$ in at most two points as in the picture.}
    \label{fig:proposition 3}
\end{figure}

In the following proposition, we will consider a class of planes that intersect a strictly convex curve $\mathcal{C}$ in at most two points (see Figure \ref{fig:proposition 3}). Note that planes in the class to be considered may not pass through the origin. 

\begin{proposition}\label{prop:at_most_two}
Let $H$ be the open hemisphere centered at $f$.
Let $\Pi'$ be a plane that bounds a strictly convex spherical cap of $\mathbb{S}^2$ whose center lies in $\mathbb{S}^2\setminus H$. Then, $\Pi'$ intersects $\mathcal{C}$ in at most two points.
\end{proposition}
\begin{proof} Suppose not.
Let $C'=\Pi'\cap \mathbb{S}^2$ and $p_1, p_2, p_3 \in \Pi'\cap \mathcal{C}$. We may assume that $f=(0,0,1)$ and the center of the strictly convex spherical cap is $o'=(0,a_2,a_3)$ with $a_2>0$ and $a_3\le 0$. Additionally, we may assume that $p_2$ is contained in the shortest arc in $C'$ connecting $p_1$ and $p_3$. It is clear that $\Pi'$ does not pass through $f$ by assumption. Then the great circle passing through $f$ and $p_2$ intersects $\mathbb{S}^2\cap\{x_2>0\}\cap\{x_3=0\}$ in a point, say $q_1\in \mathbb{S}^2$. Note that the geodesic segment from $p_2$ to $q_1$ is contained in the strictly convex spherical cap. Thus, the segment intersects the geodesic segment between $p_1$ and $p_3$ at a point, say $q_2$. Then, $q_2\in \text{Int}(\mathcal{D})$ because it is contained in the geodesic segment between $p_1, p_3\in \mathcal{C}=\partial \mathcal{D}$ (see Proposition \ref{prop:interior}). Thus, $p_2\in \partial \mathcal{D}$ is contained in the geodesic segment between $q_2$ and $f \in \text{Int}(\mathcal{D})$ (see Proposition \ref{prop:interior}), which is impossible.
\end{proof}

\section{Properties of boundary}\label{sec:properties of boundary}
In this section, we prove three propositions about $\partial \Sigma$, where $\Sigma$ is an embedded free boundary minimal annulus in $\mathbb{B}^3$ as in the introduction section. Before introducing the propositions, we first recall the two-piece property obtained by Lima and Menezes \cite{LM21TPP}. 
\begin{theorem}[Two-piece property, {\cite[Theorem A]{LM21TPP}}]\label{thm:two-piece property} A coordinate plane dissects a compact embedded free boundary minimal surface in $\mathbb{B}^3$ into exactly two components, excluding equatorial disks.
\end{theorem}
From now on, let $M_0$ denote a compact embedded free boundary minimal surface of genus zero with at least two boundary components in $\mathbb{B}^3$. In addition, let 
\begin{align}
(\partial M_0)_1,\dots, (\partial M_0)_b, \quad b\ge 2,
\end{align} 
be the boundary components of $M_0$.
 
Kusner and McGrath obtained the strictly convexity of the boundary components of $M_0$ \cite{KM20arXiv}. We give next a detailed proof of this fact, since we will use some arguments of this proof later on.

\begin{lemma}[{\cite[Corollary 4.2]{KM20arXiv}}] \label{lemm:byproduct of TPP} 
Each $(\partial M_0)_i, i=1,\dots, b,$ is a strictly convex curve on $\mathbb{S}^2$ (see Definition \ref{def:strictly convex} for a strictly convex curve on $\mathbb{S}^2$).
\end{lemma}
\begin{proof}
Let $\Pi$ be a coordinate plane. $(\partial M_0)_i\cap\Pi$ consists of transverse points for $\Pi$ and tangential points for $\Pi$. Since the linear function of $\Pi$ is a Steklov eigenfunction of $M_0$, $M_0\cap \Pi$ around a tangential point of $(\partial M_0)_i$ has at least two distinct arcs from the point that meet the boundary transversely (see the proof of \cite[Theorem 2.3]{FS16SEB}). In addition, the zero set of the linear function consists of a finite number of immersed circles and arcs between pairs of boundary components of $M_0$ (\cite[Theorem 2.5]{Che76ENS} or the proof of \cite[Theorem 2.3]{FS16SEB}). Now we show that $|(\partial M_0)_i\cap\Pi_1|\le 2$ by step-by-step. Theorem \ref{thm:two-piece property} (two-piece property) is a main tool in the proof. 

\textbf{Step 1.} We define a graph $G$ whose vertices are the components of $\partial M_0$ containing points in $\Pi$ and whose edges are the arcs in $\Sigma \cap \Pi$ whose endpoints are contained in $\partial M_0$. These arcs can be immersed circles containing a point in $\partial M_0$. In addition, every vertex has nonzero degree by definition.

\textbf{Step 2.} We show that the degree of each vertex in $G$ is at least two. If $(\partial M_0)_i$ has a tangential point, the vertex corresponding to the component has degree at least two by the argument before Step 1. If $(\partial M_0)_i$ has only transverse points, we can see that considering the sign of the linear function of $\Pi$ the boundary component has an even number of transverse points. Thus, we proved our claim in this step.

\textbf{Step 3.} We show that $G$ contains exactly one cycle. Moreover, $\partial M_0\cap \Pi$ does not contain loops that does not meet $\partial M_0$. By Step 2, $G$ has a cycle. Since $M_0$ has genus zero, the existence of a cycle in $G$ implies $M_0\setminus \Pi$ has at least two components. Moreover, if $G$ has at least two distinct cycles then $M_0\setminus \Pi$ has at least three components, which contradicts Theorem \ref{thm:two-piece property}. In particular, $\partial M_0\cap \Pi$ does not contain loops that does not meet $\partial M_0$. Thus, we proved our claim in this step. 

\textbf{Step 4.} We show that if $(\partial M_0)_i$ has a tangential point, then this point is the only point of $(\partial M_0)_i\cap \Pi$. Suppose not. Then, the vertex $v_1$ in $G$ corresponding to $(\partial M_0)_i$ has degree at least three. By Step 2, we can take a shortest cycle $C$ that contains $v_1$. Then, there exists an edge $e=(v_1 v_2)$ that is not contained in $C$. If there is a path from $v_2$ that does not contain $e$ contains a vertex in $C$, we have two distinct cycles in $G$, a contradiction with Step 3. Otherwise, since each vertex in $V(G)\setminus V(C)\setminus \{v_2\}$ has degree at least two by Step 2, we have another cycle in $G$, a contradiction with Step 3. Here, $V(G)$ and $V(C)$ are the sets of all vertices in $G$ and $C$, respectively. Thus, $(\partial M_0)_i\cap \Pi$ is exactly the tangential point that we assumed.

\textbf{Step 5.} We show that if $(\partial M_0)_i$ has only transverse points, then it has exactly two transverse points. Considering the sign of the linear function of $\Pi$, the number of transverse points in $(\partial M_0)_i$ is even. If $(\partial M_0)_i$ has at least four transverse points, a contradiction follows from Fraser and Schoen's argument in \cite[Proposition 8.1]{FS16SEB} with replacing the condition $\sigma_1=1$ by the two-piece property (Theorem \ref{thm:two-piece property}). Note that the fact that $M_0$ has genus zero is used in the argument. Thus, $(\partial M_0)_i$ contains exactly two transverse points. 

From Step 4 and 5, we have $|(\partial M_0)_i \cap \Pi| \le 2$, which completes the proof.
\end{proof}

\begin{corollary}\label{cor:the intersection points in a boundary component}
Each $(\partial M_0)_i, i=1, \dots, b,$ intersects a coordinate plane in at most two points. If they meet at exactly two points, $\Sigma$ meets the coordinate plane transversely at the points. If they meet at exactly one point, $\Sigma$ meets the coordinate plane tangentially at the point.
\end{corollary}
\begin{proof}
It follows from the proof of Lemma \ref{lemm:byproduct of TPP} (Step 4 and 5).
\end{proof}

\begin{remark}\label{rmk:at most two by first eigenfunction}For the proofs of Lemma \ref{lemm:byproduct of TPP} and Corollary \ref{cor:the intersection points in a boundary component}, we considered linear functions as Steklov eigenfunctions of $M_0$ that have exactly two nodal domains (see Definition \ref{def:nodal domain} for a nodal domain). Thus, by a similar argument, each of any first Steklov eigenfunctions of $M_0$ has at most two zeros in $(\partial M_0)_i$ for all $i$.
\end{remark}

\begin{corollary}[Transversality] \label{cor: transversality} If $M_0$ intersects a coordinate plane at a point in the interior of $M_0$, then, they meet transversely at the point. 
\end{corollary}
\begin{proof}

Suppose a coordinate plane $\Pi$ intersects $\text{Int}(M_0)$ tangentially. Then, $\Pi\cap \text{Int}(M_0)$ is isolated and we denote it by $\{v_1',\dots, v_k'\}$. We define a graph $G'$ whose vertices are $\{v_1', \dots, v_k'\}$ and the components of $\partial M_0$ containing points in $\Pi$ and whose edges are the arcs in $\Sigma \cap \Pi$ whose endpoints are contained in $\partial M_0\cup \{v_1',\dots, v_k'\}$. By \cite[Theorem 6.18]{CM11CMS}, $\text{deg}(v_i')=2m_i$ for $i=1,\dots, k$, where $m_i>1$ for all $i$. Since every vertex of $G'$ has degree at least two, as in Step 4 in the proof of Lemma \ref{lemm:byproduct of TPP}, we have two distinct cycles in $G'$. It contradicts the two-piece property with the fact that $M_0$ has genus zero (see Step 3 in the proof of Lemma \ref{lemm:byproduct of TPP}).    

\end{proof}

Let $\partial \Sigma = (\partial \Sigma)_1 \cup (\partial \Sigma)_2$, where $(\partial \Sigma)_i, i=1,2$, are distinct boundary components of $\Sigma$. For each $(\partial \Sigma)_i$, we define the following function, which is motivated from the $flux$ of a minimal surface.
 
\begin{definition}\label{def:f_i}
We define $f_i\in \mathbb{S}^2, i=1,2$, by
\begin{align}
    f_i:=\frac{F((\partial \Sigma)_i)}{\lVert F((\partial \Sigma)_i)\rVert}.
\end{align}
\end{definition}

The $flux$ is used to define a map from one-dimensional homology classes of a minimal surface in $\mathbb{R}^3$ into $\mathbb{R}^3$. It is defined by the integral of an oriented unit conormal vector of the surface along a representative closed curve (see, for example, \cite[Corollary 1.8]{CM11CMS}). Note that $(\partial \Sigma)_1$ and $(\partial \Sigma)_2$ are homologous and their position vectors become the outward unit conormal vectors of $\Sigma$ along them. Thus, we have
\begin{align}\label{sum_of_flux}
    F((\partial \Sigma)_1)+F((\partial \Sigma)_2)=0.
\end{align}
Then, we have
\begin{align}\label{sum_of_flux2}
    f_1+f_2=0.
\end{align}

Using (\ref{sum_of_flux}) or (\ref{sum_of_flux2}), we can obtain the following proposition (see also Figure \ref{fig:existence of a coordinate plane} for Proposition \ref{prop:existence of a coordinate plane}).
\begin{proposition}\label{prop:existence of a coordinate plane} The coordinate plane $\Pi$ that is perpendicular to $f_1$ and $f_2$ satisfies the following properties:
\begin{itemize}
\item $\Pi$ does not meet $\partial \Sigma$.
\item For a plane $\Pi'$ that is perpendicular to $\Pi$, it intersects $\partial\Sigma$ in at most four points.    
\end{itemize}
\end{proposition}

\begin{proof}
By Lemma \ref{lemm:byproduct of TPP}, $(\partial \Sigma)_1$ and $(\partial \Sigma)_2$ are strictly convex curves on $\mathbb{S}^2$.
Note that $\Pi$ is the coordinate plane in Proposition \ref{claim:flux} for $\mathcal{C}=(\partial \Sigma)_1, (\partial \Sigma)_2$, which implies the first statement. In addition, $\Pi'$ satisfies the property for a plane in Proposition \ref{prop:at_most_two} for $\mathcal{C}=(\partial \Sigma)_1, (\partial \Sigma)_2$, which implies the second statement. 
\end{proof}

\begin{figure}
\begin{minipage}[c]{0.55\linewidth}
\includegraphics[width=\linewidth]{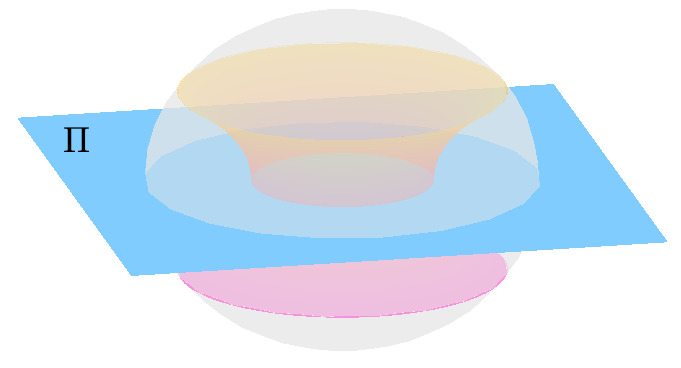}
\end{minipage}
\begin{minipage}[c]{0.4\linewidth}
\includegraphics[width=\linewidth]{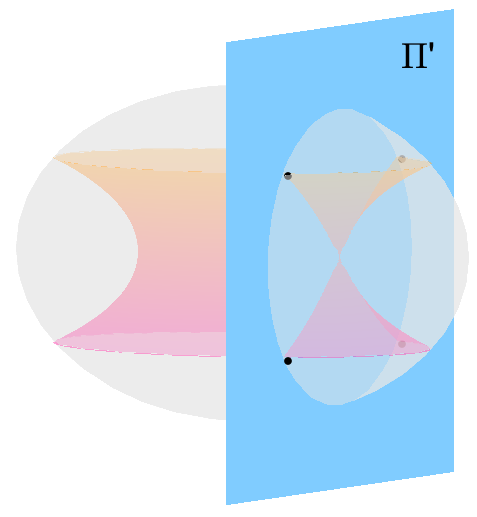}
\end{minipage}%
\caption{An illustration of Proposition \ref{prop:existence of a coordinate plane}. Let $\Pi$ be a coordinate plane perpendicular to $f_1$ and $\Pi'$ be a plane perpendicular to $\Pi$. Then, $\partial \Sigma$ does not intersect $\Pi$ (left picture) and intersects $\Pi'$ in at most four points (right picture).}
\label{fig:existence of a coordinate plane}
\end{figure}

\begin{proposition}\label{prop:plane passing through f1} If a coordinate plane $\Pi$ passes through $f_1$, $\Pi$ meets each of $(\partial \Sigma)_i, i=1,2$, at exactly two points.
\end{proposition}
\begin{proof}
By Proposition \ref{prop:interior} with Lemma \ref{lemm:byproduct of TPP}, $(\partial \Sigma)_1$ meets $\Pi$ at exactly two points. By (\ref{sum_of_flux}) or (\ref{sum_of_flux2}), the same is true for $(\partial \Sigma)_2$.
\end{proof}

\section{Symmetry conditions by planes}\label{sec:symmetry by planes}
In this section, we prove Theorem \ref{thm:interior symmetry} and Corollary \ref{cor:half-ball} by using propositions in the previous section. To this end, we will assume that $\sigma_1(\Sigma)<1$ and find a contradiction. Then, by the work of Fraser and Schoen, $\Sigma$ is congruent to the critical catenoid \cite{FS16SEB}.

We first recall some basic properties of a first Steklov eigenfunction, which are used in McGrath's work \cite{McG18ACC}.
\begin{definition}[Nodal domain]\label{def:nodal domain} Let $u$ be a Steklov eigenfunction of $\Sigma$. Then the nodal set of $u$ is $\mathcal{N}=\{\left.p\in \Sigma\right| u(p)=0\}$.
A nodal domain of $u$ is a component of $\Sigma\setminus\mathcal{N}$.
\end{definition}

\begin{lemma}[Courant nodal domain theorem, {\cite{KS69ISE}} or {\cite[Lemma 2.2]{McG18ACC}}] \label{lemm:Courant} If $u$ is a first Steklov eigenfunction, then $u$ has exactly two nodal domains.
\end{lemma}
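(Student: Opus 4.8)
The plan is to combine the variational characterization of $\sigma_1$ with the strict monotonicity of eigenvalues under domain restriction, following the classical Courant-type argument adapted to the Steklov setting. First I would recall that $u$, being a first Steklov eigenfunction, satisfies $\Delta u = 0$ in $\Sigma$ and $\partial u/\partial\nu = \sigma_1 u$ on $\partial\Sigma$, and that $u$ changes sign (otherwise, by Lemma \ref{lemm:orthogonality} applied with the constant function, $\int_{\partial\Sigma}u = 0$ forces a sign change, since $u\not\equiv 0$ and $u$ is not orthogonal to constants unless it vanishes in mean). Hence $u$ has \emph{at least} two nodal domains, say $\Omega_1,\Omega_2,\dots,\Omega_k$ with $k\ge 2$; the content of the lemma is the reverse inequality $k\le 2$.

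The key step is to suppose $k\ge 3$ and derive a contradiction. I would build a test function $\varphi$ supported on $\Omega_1\cup\Omega_2$ by taking $\varphi = a\,u\chi_{\Omega_1} + b\,u\chi_{\Omega_2}$ and choosing the constants $a,b$ (not both zero) so that $\int_{\partial\Sigma}\varphi\,u_0 = 0$, where $u_0$ is the constant eigenfunction — a single linear constraint on two parameters, hence solvable. Then $\varphi\in H^1(\Sigma)$, $\varphi\not\equiv 0$, $\varphi$ vanishes on all of $\Omega_3,\dots,\Omega_k$ (in particular on a set of positive measure), and a computation using that $u$ solves the eigenvalue equation on each nodal domain separately (integrating by parts on $\Omega_1$ and $\Omega_2$, with the boundary terms along the nodal set vanishing because $u=0$ there) gives the Rayleigh quotient identity
\begin{align}
\frac{\int_\Sigma |\nabla\varphi|^2}{\int_{\partial\Sigma}\varphi^2} = \sigma_1(\Sigma).
\end{align}
By the variational characterization \eqref{variational characterization}, $\varphi$ is therefore a minimizer, hence itself a first Steklov eigenfunction; but $\varphi$ vanishes on the open set $\mathrm{Int}(\Omega_3)$, and by the unique continuation property for harmonic functions (on the closed surface obtained by doubling, or directly since $\varphi$ is harmonic wherever it is nonzero and satisfies a Robin-type condition), a Steklov eigenfunction cannot vanish on an open subset of $\Sigma$ unless it is identically zero — a contradiction.

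The main obstacle I anticipate is the regularity and unique-continuation step: one must argue that the patched function $\varphi$, which is only \emph{a priori} $H^1$ and harmonic on each $\Omega_i$, is genuinely smooth and harmonic across the nodal set once it is known to be a global eigenfunction, and then invoke unique continuation to kill it. This is standard but must be handled with care: the cleanest route is to note that any minimizer of the Rayleigh quotient under the orthogonality constraint automatically satisfies the Euler–Lagrange equations \eqref{problem} in the weak sense, hence (by elliptic regularity for the Steklov problem on a smooth surface with smooth boundary) is smooth, and a smooth harmonic function vanishing on an open set of a connected surface vanishes identically. I would cite \cite{KS69ISE} and \cite[Lemma 2.2]{McG18ACC} for the precise statement and refer the reader there for the details of the unique continuation argument, since it is not specific to the free boundary minimal surface setting.
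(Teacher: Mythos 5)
The paper gives no proof of this lemma at all --- it simply cites \cite{KS69ISE} and \cite[Lemma 2.2]{McG18ACC} --- and your argument is the standard Courant-type proof that those references carry out, so it is essentially the intended one and it is correct. The one step to tighten is the inference from ``the Rayleigh quotient of $\varphi$ equals $\sigma_1(\Sigma)$'' to ``$\varphi$ is a first eigenfunction'': since (\ref{variational characterization}) is phrased via the harmonic extension $\hat{\varphi}$ of the boundary trace while your $\varphi$ is only piecewise harmonic, the clean route is the chain $\sigma_1(\Sigma)\le \int_\Sigma|\nabla\hat{\varphi}|^2/\int_{\partial\Sigma}\varphi^2\le \int_\Sigma|\nabla\varphi|^2/\int_{\partial\Sigma}\varphi^2=\sigma_1(\Sigma)$, whose equality case (uniqueness of the Dirichlet-energy minimizer with prescribed trace) forces $\varphi=\hat{\varphi}$ to be harmonic, after which vanishing on the open set $\Omega_3$ kills it by unique continuation, exactly as you say. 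Together with the routine observation that every nodal domain of a harmonic function must meet $\partial\Sigma$ in a set of positive measure (otherwise the maximum principle would force $u\equiv 0$ there), which guarantees the denominator $\int_{\partial\Sigma}\varphi^2$ is positive and that the orthogonality condition is a single solvable linear constraint on $(a,b)$, your proof is complete.
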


\begin{lemma}[Symmetry of a first eigenfunction, {\cite[Lemma 3.2]{McG18ACC}}] \label{lemm:symmetry}If $\Sigma$ is invariant under the reflection through a plane and $\sigma_1(\Sigma)<1$, then a first Steklov eigenfunction is invariant under the reflection.
\end{lemma}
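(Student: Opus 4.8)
The plan is a proof by contradiction built on three ingredients already available: the reflection $R$ through $\Pi$ is an isometry carrying $\Sigma$ to itself; the two-piece property (Theorem \ref{thm:two-piece property}) together with Courant's nodal domain theorem (Lemma \ref{lemm:Courant}) constrain how a harmonic function on $\Sigma$ vanishing along $\Sigma\cap\Pi$ can look; and, since $\sigma_1(\Sigma)<1$, the orthogonality relations of Lemma \ref{lemm:orthogonality} force the integral of a first eigenfunction against any coordinate function to vanish. The plane $\Pi$ passes through the origin (being a symmetry plane of $\Sigma\subset\mathbb{B}^3$), so after a rotation of coordinates I may assume $\Pi=\{x_1=0\}$ and $R(x_1,x_2,x_3)=(-x_1,x_2,x_3)$; then $R$ fixes the outward conormal $\nu=x$ of $\Sigma$ along $\partial\Sigma$ and commutes with $\Delta$, so $u\mapsto u\circ R$ preserves the first Steklov eigenspace $E$ of $\Sigma$. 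As $R$ is an involution, $E=E^{+}\oplus E^{-}$, where $E^{\pm}$ are the spaces of $R$-symmetric and $R$-antisymmetric first eigenfunctions. It therefore suffices to show $E^{-}=\{0\}$, for then every first eigenfunction equals its own symmetric part and is $R$-invariant.

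So I would assume there is $v\in E^{-}$ with $v\not\equiv0$ and derive a contradiction. Antisymmetry forces $v\equiv0$ on $\Sigma\cap\Pi$, and by the two-piece property this set dissects $\Sigma$ into exactly two connected pieces $\Sigma^{\pm}=\Sigma\cap\{\pm x_1>0\}$, interchanged by $R$. Because $v$ is harmonic and not identically zero, the strong maximum principle (and unique continuation) yields the dichotomy: either $v$ changes sign in $\operatorname{int}(\Sigma^{+})$, or, after replacing $v$ by $-v$ if necessary, $v>0$ on $\operatorname{int}(\Sigma^{+})$ and hence $v<0$ on $\operatorname{int}(\Sigma^{-})$.

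In the sign-changing case, $v$ has at least two nodal domains contained in $\Sigma^{+}$ (no nodal domain can cross $\Pi$, where $v$ vanishes), together with at least one more in $\Sigma^{-}$ by antisymmetry; this gives at least three nodal domains and contradicts Lemma \ref{lemm:Courant}. In the remaining case, $v$ and $x_1$ have the same sign throughout $\overline{\Sigma}$, so $vx_1\ge0$ on $\partial\Sigma$; moreover $vx_1$ is not identically zero there, for otherwise $v$ would vanish on $\partial\Sigma$ off the at most four points of $\partial\Sigma\cap\Pi$ (Lemma \ref{lemm:byproduct of TPP}), hence on all of $\partial\Sigma$ by continuity, forcing $v\equiv0$ by the maximum principle. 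Thus $\int_{\partial\Sigma}vx_1>0$. But $v$ is a first Steklov eigenfunction and $\sigma_1(\Sigma)<1$, so Lemma \ref{lemm:orthogonality} gives $\int_{\partial\Sigma}vx_1=0$, a contradiction. Hence $E^{-}=\{0\}$, which proves the lemma.

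The sign-changing case is essentially immediate from Courant's theorem, so I expect the main obstacle to be the second case, in which the nodal set of the hypothetical antisymmetric eigenfunction is precisely $\Sigma\cap\Pi$ and Courant's theorem is silent. Ruling it out genuinely uses that $x_1$ is a Steklov eigenfunction with eigenvalue $1>\sigma_1(\Sigma)$, via Lemma \ref{lemm:orthogonality}, and also the minor but indispensable observation that $\partial\Sigma$ meets the coordinate plane $\Pi$ in only finitely many points, so that $vx_1$ cannot vanish identically on $\partial\Sigma$ without $v$ itself vanishing there.
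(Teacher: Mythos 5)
Your argument is correct. The paper itself does not prove this lemma---it simply cites \cite[Lemma 3.2]{McG18ACC}---and your proof is essentially the standard one from that reference: split the first eigenspace into even and odd parts under the reflection, observe that a nonzero odd eigenfunction vanishes on $\Sigma\cap\Pi$, use the two-piece property plus Courant's theorem to force its nodal domains to be exactly the two halves $\Sigma^{\pm}$, and then contradict the orthogonality $\int_{\partial\Sigma}vx_1=0$ supplied by $\sigma_1(\Sigma)<1$. All the supporting steps (no nodal domain crosses $\Pi$; $vx_1\ge 0$ but $\not\equiv 0$ on $\partial\Sigma$ because $\partial\Sigma\cap\Pi$ is finite) are sound.

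One small point worth tightening: you assert that a symmetry plane of $\Sigma$ must pass through the origin ``being a symmetry plane of $\Sigma\subset\mathbb{B}^3$,'' which is not automatic from that phrasing alone. It is harmless here, since the lemma is only ever invoked for coordinate planes, and in any case it follows from the flux identity $\int_{\partial\Sigma}x=0$ (equation (\ref{sum_of_flux})): applying the affine reflection to this identity kills any translation part, so $R_\Pi$ is linear and $\Pi$ contains the origin.
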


The following lemmas are obtained by Proposition \ref{prop:existence of a coordinate plane} and \ref{prop:plane passing through f1}.
\begin{lemma}\label{lemm:sign-changing}
If $\sigma_1(\Sigma)<1$, then a first Steklov eigenfunction $u$ is sign-changing in one of the boundary components of $\Sigma$. Furthermore, the nodal set of $u$ in this component of $\partial \Sigma$ is exactly two points.
\end{lemma}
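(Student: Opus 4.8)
The plan is to argue by contradiction on both assertions, working throughout with a fixed first Steklov eigenfunction $u$ of $\Sigma$ (we are assuming $\sigma_1(\Sigma)<1$), and to extract everything from the nodal structure of $u$ together with the flux information of Section \ref{sec:properties of boundary}. Three preliminary facts will be used repeatedly. First, $u|_{\partial\Sigma}\not\equiv 0$: otherwise the Cauchy data $u$ and $\partial u/\partial\nu=\sigma_1(\Sigma)\,u$ both vanish along $\partial\Sigma$, and unique continuation forces $u\equiv 0$, which is impossible. Second, by Lemma \ref{lemm:orthogonality}, $\int_{\partial\Sigma}u=0$ and $\int_{\partial\Sigma}u\,x_i=0$ for $i=1,2,3$. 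Third, by Lemma \ref{lemm:Courant}, $u$ has exactly two nodal domains; since $\int_{\partial\Sigma}u=0$ and $u|_{\partial\Sigma}\not\equiv 0$, both signs occur on $\partial\Sigma$, so these domains are $\Sigma^+=\{u>0\}$ and $\Sigma^-=\{u<0\}$, and each of $\Sigma^+,\Sigma^-$ is open in $\Sigma$ and connected.

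\textbf{Step 1: $u$ changes sign on one boundary component.} By Propositions \ref{prop:nointersection} and \ref{prop:interior} we rotate coordinates so that $f_1=e_3$; then the great circle $\{x_3=0\}$ misses $\partial\Sigma$, and since each $(\partial\Sigma)_i=\partial D_i$ is a connected curve while $D_i$ is a convex spherical set with $f_i=\pm e_3\in\text{Int}(D_i)$, we get $(\partial\Sigma)_1\subset\{x_3>0\}$ and $(\partial\Sigma)_2\subset\{x_3<0\}$. Now suppose $u$ had constant sign on each of $(\partial\Sigma)_1,(\partial\Sigma)_2$. Since $\int_{\partial\Sigma}u=0$ and $u|_{\partial\Sigma}\not\equiv 0$, the two components cannot carry the same sign, so after relabeling and possibly replacing $u$ by $-u$ we may assume $u\ge 0$ on $(\partial\Sigma)_1$ and $u\le 0$ on $(\partial\Sigma)_2$, with $u\not\equiv 0$ on at least one of them. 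Then $u\,x_3\ge 0$ on each component, with $u\,x_3\not\equiv 0$ on one, whence $\int_{\partial\Sigma}u\,x_3>0$, contradicting Lemma \ref{lemm:orthogonality}. Hence $u$ changes sign on a boundary component, which we relabel $(\partial\Sigma)_1$.

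\textbf{Step 2: the nodal set of $u$ on $(\partial\Sigma)_1$ is exactly two points.} Write $Z=\{p\in(\partial\Sigma)_1:u(p)=0\}$ and $U^{\pm}=(\partial\Sigma)_1\cap\Sigma^{\pm}$. If $Z$ contained an arc, then $u$ and $\partial u/\partial\nu$ would vanish along it and unique continuation would again give $u\equiv 0$; so $Z$ has empty interior, $\overline{U^+}\cup\overline{U^-}=(\partial\Sigma)_1$, and $U^+,U^-$ are nonempty open subsets of the circle $(\partial\Sigma)_1$ (nonempty because $u$ is sign-changing there). The key point is that $U^+$, and symmetrically $U^-$, is connected. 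Suppose $U^+$ had two distinct components $J_1,J_2$; pick $P_i\in J_i$ and join $P_1$ to $P_2$ by an embedded arc $\gamma$ whose interior lies in $\text{Int}(\Sigma)\cap\Sigma^+$ (possible since $\Sigma^+$ is open and path-connected, and a path in $\Sigma^+$ avoids $\mathcal{N}$ and can be put in general position). A properly embedded arc in the annulus $\Sigma$ with both endpoints on $(\partial\Sigma)_1$ separates $\Sigma$ into a disk and an annulus, with $(\partial\Sigma)_2$ in the annulus and the two subarcs of $(\partial\Sigma)_1$ determined by $P_1,P_2$ lying on the boundaries of the two pieces respectively. Each of those subarcs meets $U^-$ (it runs between the distinct components $J_1,J_2$ of $U^+$), so choosing $x'\in U^-$ in one piece and $x''\in U^-$ in the other and joining them by a path in the connected set $\Sigma^-$ produces a point of $\gamma\cap\Sigma^-\subset\Sigma^+\cap\Sigma^-=\emptyset$, a contradiction. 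Thus $U^+$ and $U^-$ are disjoint, nonempty, connected, proper open subarcs of $(\partial\Sigma)_1$ with $\overline{U^+}\cup\overline{U^-}=(\partial\Sigma)_1$; hence they are complementary and $Z=\overline{U^+}\cap\overline{U^-}$ consists of exactly two points.

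\textbf{Expected main obstacle.} The delicate step is the claim in Step 2 that \emph{each} of the two subarcs cut off by $\gamma$ genuinely meets $U^-$ on opposite sides of $\gamma$. This is clear when every zero of $u|_{(\partial\Sigma)_1}$ is a transverse sign change, but a priori it can fail if $u|_{(\partial\Sigma)_1}$ has a tangential zero (a point where $\nabla u=0$) or the interior nodal set has a singular point: then all of $U^-$ could sit to one side of every admissible $\gamma$. Excluding this requires the standard local description of the nodal set of a harmonic function satisfying the Steklov condition — a zero of order $m$ emits $m$ nodal arcs along the boundary and $2m$ in the interior — together with a cut-and-count (Euler characteristic) argument on the annulus showing that any such extra nodal arc forces a third component of $\Sigma\setminus\mathcal{N}$, contradicting Lemma \ref{lemm:Courant}. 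Carrying out this bookkeeping carefully — equivalently, verifying that a nodal set dividing the annulus into exactly two regions cannot have more than two endpoints on a single boundary component — is the part that will demand the most care; alternatively one may first invoke analyticity of $u$ up to $\partial\Sigma$ to reduce $Z$ to a finite set and then run the same topological count.
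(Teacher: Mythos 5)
Your Step 1 is correct and is essentially the paper's argument: the paper likewise combines $\int_{\partial\Sigma}u=0$ with the existence (Proposition \ref{prop:nointersection}) of a great circle $\{ax_1+bx_2+cx_3=0\}$ missing $\partial\Sigma$ to produce $\int_{\partial\Sigma}u(ax_1+bx_2+cx_3)\neq 0$, contradicting Lemma \ref{lemm:orthogonality}; your normalization $f_1=e_3$ and use of $x_3$ is the same computation.

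Step 2, however, has a genuine gap, and it is exactly the one you flag yourself. Your separation argument needs \emph{both} subarcs of $(\partial\Sigma)_1$ cut off by $\gamma$ to meet $U^-$, but if some point of $Z$ is a zero of $u|_{(\partial\Sigma)_1}$ at which $u$ does not change sign, one of those subarcs may meet only $Z$ and never $U^-$, and then no contradiction arises. As written, your proof establishes only that $u|_{(\partial\Sigma)_1}$ has exactly two \emph{sign changes}; it does not exclude additional degenerate boundary zeros, so it does not yet prove $|Z|=2$. The sentence beginning ``Excluding this requires the standard local description of the nodal set\dots'' is a statement of what remains to be proved, not a proof: the local emanation count at a boundary zero of higher order and the bookkeeping showing that any extra nodal arc on a genus-zero surface creates a third nodal domain are precisely the missing content. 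The paper closes this by citing the structure theorem for nodal sets of Steklov eigenfunctions (Cheng, and Fraser--Schoen): $\mathcal{N}$ is a finite union of loops and of arcs with endpoints on $\partial\Sigma$, and every boundary nodal point is an endpoint of such an arc; then three or more points of $Z$ force at least three components of $\Sigma\setminus\mathcal{N}$ (using that $\Sigma$ has genus zero), contradicting Lemma \ref{lemm:Courant}. Note that this ingredient makes your connectedness argument for $U^\pm$ unnecessary --- once you invoke the structure theorem, the direct count of nodal domains already finishes the proof --- so the honest options are either to import that theorem and count as the paper does, or to actually carry out the local analysis at a non-sign-changing boundary zero, which you have not done.
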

\begin{proof}
Suppose $u$ does not change its sign in each boundary component of $\Sigma$. Since $\int_{\partial \Sigma}u=0$ by \ref{lemm:orthogonality} in Lemma \ref{lemm:Steklov eigenfunction}, $u$ has different signs in $(\partial \Sigma)_1$ and $(\partial \Sigma)_2$. By Proposition \ref{prop:existence of a coordinate plane}, we can find a plane $\{v\cdot x=0\}$ that does not meet $\partial \Sigma$. Since $v \cdot x$ has different signs in $(\partial \Sigma)_1$ and $(\partial \Sigma)_2$, we have
\begin{align} \label{orthogonal to a plane1}
    \int_{\partial \Sigma} (v\cdot x)u  \neq 0.
\end{align}
On the other hand, by \ref{lemm:orthogonality} in Lemma \ref{lemm:Steklov eigenfunction}, $\sigma_1(\Sigma)<1$ implies that $\int_{\partial \Sigma}(v\cdot x)u=0$, which leads a contradiction with (\ref{orthogonal to a plane1}). 
Therefore, $u$ is sign-changing in at least one component of $\partial \Sigma$, say $(\partial \Sigma)_1$. Then, $(\partial \Sigma)_1\cap \mathcal{N}$ has at least two points. By Remark \ref{rmk:at most two by first eigenfunction}, $(\partial \Sigma)_1$ has exactly two points in $\mathcal{N}$ and this finishes the proof.
\end{proof}

\begin{lemma} \label{lemm:reflection planes}If $\partial \Sigma$ is invariant under the reflection through a plane $\Pi$, then $\Pi$ is either a coordinate plane passing through $f_1$ or a coordinate plane perpendicular to $f_1$. In particular, the former case is equivalent to a reflection plane that meets $\partial \Sigma$ and the latter case is equivalent to a reflection plane that does not meet $\partial \Sigma$.
\end{lemma}
\begin{proof}
Let $\Pi$ be a symmetry plane of $\Sigma$ and let $R_{\Pi}$ be the reflection through $\Pi$. Then, $R_{\Pi}$ satisfies one of the following:
\begin{multicols}{2}
\begin{itemize}
    \item $R_{\Pi}(f_1)=f_1$, $R_{\Pi}(f_2)=f_2$,
    
    \item $R_{\Pi}(f_1)=f_2$. 
\end{itemize}
\end{multicols}
In both cases, $\Pi$ should be a coordinate plane. Indeed, for the former case, $\Pi$ is a coordinate plane passing through $f_1$. By Proposition \ref{prop:plane passing through f1}, $\Pi$ meets $\partial \Sigma$. For the latter case, $\Pi$ is the coordinate plane perpendicular to $f_1$. In this case, $\Pi$ does not meet $\partial \Sigma$ by Proposition \ref{prop:existence of a coordinate plane}.
\end{proof}

Using previous lemmas with Proposition \ref{prop:existence of a coordinate plane} and \ref{prop:plane passing through f1}, we can obtain the following theorem.\\

\textit{Proof of Theorem \ref{thm:interior symmetry}}

Suppose $\sigma_1(\Sigma)<1$. Let $u$ be a first Steklov eigenfunction and $\mathcal{N}$ be the nodal set of $u$. Using Lemma \ref{lemm:sign-changing}, we may assume that $u$ is sign-changing in $(\partial \Sigma)_1$ and let $p_1, p_2 \in \mathcal{N}\cap (\partial \Sigma)_1$.

For simplicity, we may assume $f_1=(0,0,1)$. By Lemma \ref{lemm:reflection planes}, it is enough to consider the following two cases.

\textbf{Case 1} (The Reflection planes are coordinate planes passing through $f_1$). \\
By Proposition \ref{prop:plane passing through f1}, each of the reflection planes meets $(\partial \Sigma)_1$ at exactly two points. If the reflection planes are not orthogonal to each other, by symmetry of $u$ (Lemma \ref{lemm:symmetry}), there are at least three nodal points in $(\partial \Sigma)_1$, which contradicts Lemma \ref{lemm:sign-changing}. Now we assume two reflection planes are orthogonal to each other and let $\Pi_1=\{x_1=0\}$ and $\Pi_2=\{x_2=0\}$. Then, $\mathcal{N}\cap (\partial\Sigma)_1\subset \Pi_1 \cup \Pi_2$, because if it is not true, $(\partial \Sigma)_1$ contains at least four nodal points by the symmetry of $u$, contrary to Lemma \ref{lemm:sign-changing}. Now, assume $p_1\in\Pi_1$. Then, $p_2\in \Pi_1$ by the symmetry of $u$. Then, also by the symmetry of $u$, $u$ does not change its sign in $(\partial \Sigma)_1$, contrary to our assumption. 

\textbf{Case 2} (A reflection plane is $\{x_3=0\}$).\\
Let $\Pi_1'=\{x_3=0\}$. By the symmetry of $u$ (see Lemma \ref{lemm:symmetry}), we have $p_3, p_4 \in \mathcal{N}\cap (\partial \Sigma)_2$ and $R_{\Pi_1'}(\{p_1, p_2\})=\{p_3, p_4\}$. Then, we have a plane $\Pi_2'$ passing through $p_1, p_2, p_3, p_4$ which is perpendicular to $\Pi_1'$. Let $\Pi_2'= \{x_2=c\}$. By Proposition \ref{prop:existence of a coordinate plane}, $|\Pi_2' \cap \partial\Sigma|\le 4$, thus $\Pi_2'\cap \partial \Sigma=\{p_1, p_2, p_3, p_4\}$. By the symmetry of $u$, the sign of $u$ does not change at each $\left((\partial \Sigma)_1\cap\{x_2>c\}\right)\cup\left((\partial \Sigma)_2\cap\{x_2>c\}\right)$ and $\left((\partial \Sigma)_1\cap\{x_2<c\}\right)\cup\left((\partial \Sigma)_2\cap\{x_2<c\}\right)$. Note that the two signs are different because of the definition of $p_1$ and $p_2$. Thus, we have
\begin{align}\label{orthogonal to a plane2}
    \int_{\partial \Sigma}u(x_2-c)  \neq 0.
\end{align}
On the other hand, by \ref{lemm:orthogonality} in Lemma \ref{lemm:Steklov eigenfunction}, $\sigma_1(\Sigma)<1$ implies that $\int_{\partial \Sigma}u=\int_{\partial \Sigma}ux_2=0$, which leads a contradiction with (\ref{orthogonal to a plane2}). 

Thus, in both of Case 1 and 2, we have $\sigma_1(\Sigma)=1$. Therefore, by the theorem of Fraser and Schoen (see \cite[Theorem 1.2]{FS16SEB}), $\Sigma$ is congruent to the critical catenoid.
\qed\\

\textit{Proof of Corollary \ref{cor:half-ball}}

$\Sigma'$ can be extended to the embedded free boundary minimal annulus in the unit ball in $\mathbb{R}^3$ with the reflection symmetry through the plane. By the second statement in Theorem \ref{thm:interior symmetry}, the surface is the critical catenoid, which is the desired conclusion. \qed  

\begin{remark}\label{rmk:boundary condition}
The boundary condition of Corollary \ref{cor:half-ball}, each boundary component of $\Sigma'$ is contained in either an open hemisphere or equatorial disk, would be necessary. In \cite{CFS22CJM}, Carlotto, Franz, and Schulz constructed a compact embedded free boundary minimal surface $\Sigma_{1,1}$ of genus 1 in $\mathbb{B}^3$ with only one boundary component. Note that $\Sigma_{1,1}$ has the dihedral symmetry $D_{2}$. The author expects that there is a coordinate plane $\Pi$ such that $\Sigma_{1,1}$ would be invariant under the reflection through $\Pi$ and a component of $\Sigma_{1,1}\setminus\Pi$ would be an embedded free boundary minimal annulus in one of the half-balls bounded by $\Pi$ (see Figure \ref{fig:half ball}). 
\end{remark}

\begin{figure}
    \centering
    \includegraphics[width=0.8\textwidth]{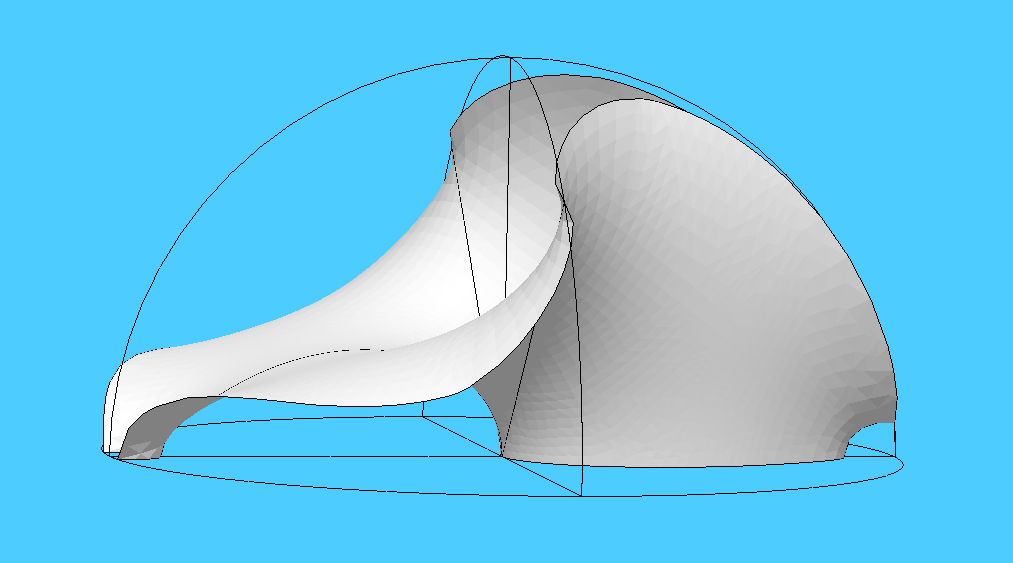}
    \caption{An expected image of a free boundary minimal annulus in a half-ball. Note that one boundary component intersects both the open hemisphere and the equatorial disk.}
    \label{fig:half ball}
\end{figure}

\section{Symmetry properties}\label{sec:symmetry principle}
In this section, we will consider symmetric properties of a compact embedded free boundary minimal surface in $\mathbb{B}^3$ (Theorem \ref{thm:symmetry principle}, Lemma \ref{lemm:boundary symmetry}). The first theorem roughly says that a symmetry of the boundary of a compact embedded free boundary minimal surface in $\mathbb{B}^3$ engenders a global symmetry. It is reminiscent of the symmetry principle for Bryant surfaces (see \cite[Theorem 12]{GM05CLB}).

We will prove Theorem \ref{thm:symmetry principle} (symmetry principle) by the Bj\"{o}rling uniqueness theorem (see, for example, \cite[Theorem 1 in p.125]{DHS10MS}). Before we prove the theorem, we observe the Gauss map of $M$ along $\partial M$. Note that a compact embedded free boundary minimal surface in $\mathbb{B}^3$ is orientable, hence the Gauss map of $M$ is well-defined (see \cite[Proposition 2.6]{Li20FBM}).

\begin{lemma}\label{lemm:pre-boundary symmetry} Let $(\partial M)_1, \dots, (\partial M)_n$ be the boundary components of $M$, where $M$ is defined in Theorem \ref{thm:symmetry principle}. If $A((\partial M)_i)=(\partial M)_j$ for some $1\le i,j\le n$ ($i$ can be identical to $j$), then the Gauss map $\nu$ of $M$ satisfies
\begin{align} \label{pre-boundary symmetry}
    \nu\circ A(p) = \pm A\circ \nu(p)
\end{align}
        for all $p\in (\partial \Sigma)_i$. Note that the signs are identical for all $p\in (\partial \Sigma)_i$.
\end{lemma}
\begin{proof}
Note that $\nu(p)$ is perpendicular to a tangent vector of $(\partial M)_i$ at $p$, say $t(p)$, and to the outward conormal vector of $M$ along $(\partial M)_i$ at $p$, say $\eta(p)=p$. 
Then, 
\begin{align}
    A\circ \nu(p) \perp A\circ t(p), A\circ \eta(p).
\end{align}
Since $A$ is linear, $A\circ t(p)$ is a tangent vector of $(\partial M)_j$ at $A(p)$. Furthermore, $A\circ \eta(p)=A(p)$ is the outward unit conormal vector of $M$ along $(\partial M)_j$ at $A(p)$. Thus, $\nu\circ A(p)$ is parallel to $A\circ \nu(p)$. Since the function
\begin{align}
    \left<\nu\circ A(\cdot), A\circ \nu(\cdot)  \right> : (\partial M)_i \rightarrow \{1,-1\}
\end{align}
is continuous, the sign in (\ref{pre-boundary symmetry}) is same for all $p\in (\partial \Sigma)_i$. 
\end{proof}

\textit{Proof of Theorem \ref{thm:symmetry principle}}

Note that $\partial M$ is real analytic (see \cite[section 6]{Lew1951MSP}).  By Lemma \ref{lemm:pre-boundary symmetry}, $A$ maps a Bj\"{o}rling data of $(\partial M)_i$ into those of $(\partial M)_j$ for some $i,j$. Then, by the Bj\"{o}rling uniqueness theorem, $A$ maps a Bj\"{o}rling solution around $(\partial M)_i$ into a Bj\"{o}rling solution around $(\partial M)_j$. We can apply this argument multiple times so that $M$ is covered by a union of Bj\"{o}rling solutions. Now we need to check whether our union of Bj\"{o}rling solutions in $M$ is $A$-invariant. Let $U_1$ and $U_2$ be Bj\"{o}rling solutions such that $A(U_1)=U_2$. Since $M$ can be continued analytically across $\mathbb{S}^2$ (see \cite[section 6]{Lew1951MSP}), we may assume $U_i\cap M=U_i \cap \bar{\mathbb{B}}^3$ for all $i=1,2$. Then,
\begin{align}
    A(U_1\cap M)=A(U_1\cap \bar{\mathbb{B}}^3)=U_2\cap \bar{\mathbb{B}}^3=U_2\cap M.
\end{align}
Therefore, we can conclude $A(M)=M$.
\qed

\begin{remark}
By Theorem \ref{thm:symmetry principle}, if a boundary component of $M$ is invariant under the reflection through a coordinate plane, then $M$ is invariant under the reflection. It implies that if a component of $\partial M$ is rotationally symmetric, $M$ is rotationally symmetric. Then, one can show that $M$ is the equatorial disk or the critical catenoid (see \cite[Corollary 3.9]{KL21FBM}).
\end{remark}

By the symmetry principle and Theorem \ref{thm:interior symmetry}, the following corollary is straightforward.

\begin{corollary} \label{cor:boundary condition by planes} If $\partial\Sigma$ has one of following symmetry conditions, then $\Sigma$ is congruent to the critical catenoid.
\begin{itemize} 
    \item $\partial\Sigma$ is invariant under the reflections through two distinct planes. 
    \item $\partial\Sigma$ is invariant under the reflection through a plane that does not meet $\partial \Sigma$.
\end{itemize}
\end{corollary} 

We will obtain a stronger lemma (Lemma \ref{lemm:boundary symmetry}) than Lemma \ref{lemm:pre-boundary symmetry} for $M_0$, which will be used in Section \ref{sec:congruent}. The strategy of the lemma is analyzing the direction of the Gauss map along $(\partial M_0)_i$ (Proposition \ref{prop:good Gauss map}). The author thanks José A. Gálvez for providing Lemma \ref{lemm: cone condition} that makes the proof of Proposition \ref{prop:good Gauss map} simpler than the previous proof.

\begin{lemma}\label{lemm: cone condition}
Let $D_i, i=1,\dots, b$, be the strictly convex spherical sets whose boundaries are $(\partial M_0)_i, i=1,\dots, b$, respectively (see Proposition \ref{prop:interior}). Then, for all $i$, there is a small neighborhood of $(\partial M_0)_i$ in $M_0$ that is contained in outside of the convex cone over $D_i$.
\end{lemma}

\begin{proof}
Let $\nu$ be an oriented Gauss map of $M_0$.
Take a point $p\in (\partial M_0)_i$ and consider the normal plane $\Pi$ at $p$ in direction $x(p)$, which is the plane spanned by $x(p)$ and $\nu(p)$. Since the mean curvature of $M_0$ (or analytic continuation of $M_0$ across $\mathbb{S}^2$, see \cite[section 6]{Lew1951MSP}) at $p$ is zero, the normal curvature in direction $x(p)$ has different sign with the normal curvature in direction of the tangent vector of $(\partial M_0)_i$ at $p$. In addition, since the normal curvature in direction of the tangent vector of $(\partial M_0)_i$ at $p$, which is equal to the curvature of $(\partial M_0)_i$ on $\mathbb{S}^2$ at $p$ up to sign, is nonzero by the strictly convexity of $(\partial M_0)_i$, so does the normal curvature in direction $x(p)$. Thus, a small neighborhood of $\Pi\cap M_0$ around $p$ is contained in outside of the convex cone over $D_i$, which implies the conclusion.
\end{proof}

\begin{proposition} \label{prop:good Gauss map} With the same definition of $D_i, i=1,\dots, b,$ in Lemma \ref{lemm: cone condition}, there is a Gauss map $\nu$ of $M_0$ such that $\left.\nu\right|_{(\partial M_0)_i}$ is the outward unit conormal vector of $D_i$ along $\partial D_i$ for all $i$.
\end{proposition}
\begin{proof}

First, as in the proof of Lemma \ref{lemm: cone condition}, we consider the normal plane $\Pi$ in direction $x(p)$ at $p\in (\partial M_0)_1$. Then, $\Pi \cap M_0$ contains a curve $\gamma(s), s\in [0,s_1]$ with $\gamma(0)=p$ located outside of the convex cone over $D_1$. For convenience, we consider a $xyz$-coordinate system of $\mathbb{R}^3$ so that $\Pi$ becomes the $xy$-plane. We may assume $\gamma(0)=(0, 1, 0)$ and the intersection of the convex cone and $\Pi$ lies in $\{x\ge 0\}$. Let $t(s), n(s)$ be the tangent vector and the normal vector of $\gamma$ at $\gamma(s)$, respectively. Then, $t(0)=-e_2$ and $n(0)=e_1$. See Figure \ref{fig: outer curve} for this setting. Then, for sufficiently small $s_1$,
\begin{align} \label{normal vector}
    \left<n(s), e_1\right> >0, \left<n(s), e_2\right><0, \text{ and } \left<n(s), e_3\right>=0  
\end{align}
for all $s\in (0, s_1]$.

% As in the proof of Lemma \ref{lemm:pre-boundary symmetry}, we can show that 
% \begin{align}
%    \nu=\mathbf{n}_{in} \quad \text{ in }(\partial M_0)_1.
%\end{align}

\begin{figure}
    \centering
    \begin{tikzpicture}
   \draw[fill=gray!10] (-6,-0.5)--(6,-0.5)--(6,5)--(-6,5) --cycle;
   \draw [domain=30:140] plot ({4*cos(\x)}, {4*sin(\x)});
   \draw [ultra thick,domain=40:90] plot ({4*cos(\x)}, {4*sin(\x)});
   \draw[fill=green!20] (0,0) --  (40:4) arc(40:90:4) -- cycle;

   \draw[domain=-80:0] plot ({-1.2+1.2*cos(\x)}, {4+1.8*sin(\x)});
   \node[left] at ({-1.2+1.2*cos(40)}, {4-1.8*sin(40)})  {$\gamma$}; 
   \draw (-.2,4)|-(0,3.8);
   
   \draw[->,thick] ({-1.2+1.2*cos(80)}, {4-1.8*sin(80)})--({-1.2+1.2*cos(80)-0.6*sin(80)}, {4-1.8*sin(80)-0.9*cos(80)}) node[left]{$t(s)$};
   \draw[->,thick] ({-1.2+1.2*cos(80)}, {4-1.8*sin(80)})--({-1.2+1.2*cos(80)+0.9*cos(80)}, {4-1.8*sin(80)-0.6*sin(80)}) node[below]{$n(s)$};
   \draw ({-1.2+1.2*cos(80)-0.2*sin(80)}, {4-1.8*sin(80)-0.3*cos(80)}) -- ({-1.2+1.2*cos(80)-0.2*sin(80)+0.3*cos(80)}, {4-1.8*sin(80)-0.3*cos(80)-0.2*sin(80)}) --({-1.2+1.2*cos(80)+0.3*cos(80)}, {4-1.8*sin(80)-0.2*sin(80)});
   
   \fill [black] (0,4) circle (1.5pt) node[above]{$(0,1,0)$};
   \node[above, xshift=0.3cm] at ({4*cos(65)}, {4*sin(65)}) {$D_1\cap \Pi$};
   \fill [black] (0,0) circle (1.5pt) node[left]{$o$};
   \node[right] at ({4*cos(30)}, {4*sin(30)}) {$\Pi\cap \partial \mathbb{B}^3$};
   \node at (-5.5, 4.5) {$\Pi$};
   
   \draw[->] (4.5,3.5)--(4.5, 4.5) node[left]{$y$};
   \draw[->] (4.5, 3.5) -- (5.5, 3.5) node[below]{$x$};

\end{tikzpicture}

    \caption{Setting in the proof of Proposition \ref{prop:good Gauss map}. We take a normal plane $\Pi$ at $p\in (\partial M_0)_1$ in direction $x(p)$. Then, by Lemma \ref{lemm: cone condition}, $\Pi\cap M_0$ contains a curve $\gamma(s), s\in [0,s_1]$ with $\gamma(0)=p$ that is not contained in the convex cone over $D_1$. We consider a $xyz$-coordinate system of $\mathbb{R}^3$ so that $\Pi$ becomes the $xy$-plane and assume $\gamma(0)=(0,1,0)$. In addition, we may assume the circular sector that is the intersection of the convex cone over $D_1$ with $\Pi$ lies in $\{x\ge 0\}$ (see the green region in the figure). The thick circular arc in the figure is the intersection of $D_1$ and $\Pi$. Let $t(s)$ and $n(s)$ be the tangent vector and the normal vector of $\gamma$ at $\gamma(s)$, respectively. Note that $t(0)=-e_2$ and $n(0)=e_1$.}
    \label{fig: outer curve}
\end{figure}
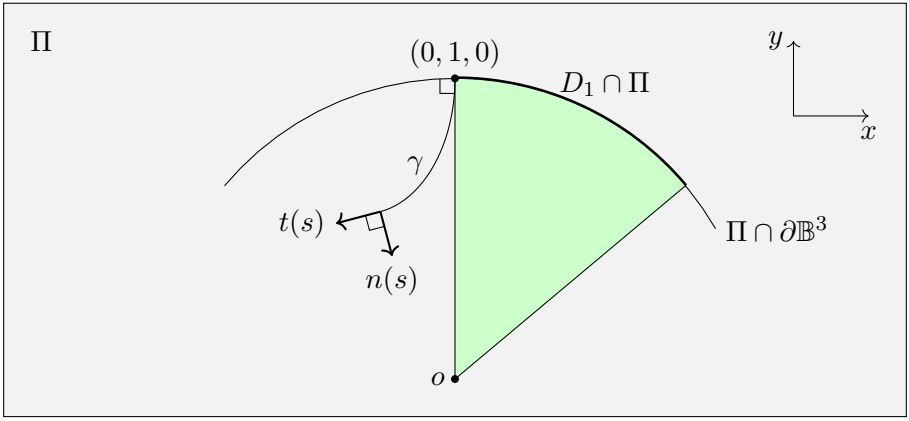

%\textbf{Claim.} We show that a small neighborhood of $(\partial M_0)_1$ in $M_0$ is contained in the convex cone over $D_1$. 
%\begin{proof}
%Suppose not. There is a coordinate plane $\Pi$ passing through $f_1$ such that $\Pi \cap M_0$ contains a curve $\gamma(s), s\in [0,s_1]$ with $\gamma(0) \in \Pi \cap (\partial M_0)_1$ located outside of the convex cone. For convenience, we consider a $xyz$-coordinate system of $\mathbb{R}^3$ so that $\Pi$ becomes the $xy$-plane. We may assume $\gamma(0)=(0,1,0)$ and the intersection of the convex cone and $\Pi$ lies in $\{x\ge 0\}$. See Figure \ref{fig: outer curve} for this setting. 

%Note that we can show that $\gamma$ meets $\partial \mathbb{B}^3$ at $\gamma(0)=(0,1,0)$ perpendicularly, since we have $\gamma'(0) \in \Pi\cap T_{(0,1,0)}M_0=\{\left.c\cdot (0,1,0)\right|c\in \mathbb{R}\}$, where $T_{(0,0,1)}M_0$ is the tangent plane of $M_0$ at $(0,0,1)$. In addition, $\gamma$ cannot be tangent to $\{\left.r\cdot (0,1,0)\right|0\le r\le 1\}$ except at $\gamma(0)$ because of (\ref{direction of the gauss map}). Let $t(s), n(s)$ be the tangent vector and the normal vector of $\gamma$ at $\gamma(s)$, respectively. Then, for sufficiently small $s_1$,

Let $\nu$ be an oriented Gauss map of $M_0$. By Corollary \ref{cor: transversality}, any coordinate planes cannot meet the interior of $M_0$ tangentially. Thus, $o\notin M_0$, where $o$ is the origin, and $\left<x, \nu \right>\neq 0$ in the interior of $M_0$, where $x$ is the position vector in $\mathbb{R}^3$. Now, we may assume 
\begin{align}\label{direction of the gauss map}
    \left<x, \nu \right>>0 \quad \text{ in }\text{Int}(M_0).
\end{align}

We prove $\nu$ satisfies the statement by proof by contradiction. Assume that $\nu$ is coincide with the inward unit conormal vector $\mathbf{n}_{in}$ of $D_1$ along $\partial D_1=(\partial M_0)_1$ at $p$. Then, we have $\nu(0)=n(0)$, so $\left<\nu(0), e_1\right>>0$ and $\left|\left<\nu(0), e_3\right>\right| \neq 1$. Here, we denote $\nu(\gamma(s))$ by $\nu(s)$ for simplicity. By continuity, we have 
\begin{align} \label{gauss vector}
     \left<\nu(s), e_1\right>>0 \text{ and } \left|\left<\nu(s), e_3\right>\right|\neq 1 \quad \text{ for all } s\in [0, s_2].
\end{align}
for sufficiently small $s_2<s_1$.
$\nu(s)$ lies in the unit circle $C(s), s\in[0, s_2]$ in the plane perpendicular to $t(s)$. Note that $C(s)$ contains $e_3$ and $-e_3$. Then, $C(s)-\{e_3,-e_3\}$ consists of the two components, say $C_1(s)$ and $C_2(s)$. We may assume $n(s)\in C_1(s)$ for all $s\in[0,s_2]$ by continuity. Clearly, $n(s)$ is the midpoint of $C_1(s)$. Since $\nu(0)=n(0)=e_1$, $\nu(0)\in C_1(0)$. Furthermore, by the inequation in (\ref{gauss vector}), $\nu(s)\in C_1(s)$ for all $s\in [0,s_2]$. Since each point in $C_1(s), s\in (0,s_2]$ has negative $y$-coordinate by (\ref{normal vector}), we have
\begin{align} \label{gauss vector 2}
    \left<\nu(s), e_2\right><0 \quad \text{ for all } s\in (0, s_2].
\end{align}
On the other hand, the position vector $x$ satisfies
\begin{align}\label{position vector}
    \left<x(s), e_1\right><0, \left<x(s), e_2\right>>0, \text{ and } \left<x(s), e_3\right>=0,
\end{align}
where $x(s)=x(\gamma(s))$ for all $s\in (0,s_2]$. Thus, by (\ref{gauss vector}), (\ref{gauss vector 2}), and (\ref{position vector}), we have 
\begin{align}
    \left<x(s),\nu(s)\right>=\sum_{i=1}^3\left<x(s), e_i\right>\cdot \left<\nu(s), e_i\right> <0 \quad \text{ for all } s\in (0, s_2], 
\end{align}
which contradicts (\ref{direction of the gauss map}).
%\end{proof}

%Then, this claim contradicts Lemma \ref{lemm: cone condition}, which implies the %conclusion.

\end{proof}

\begin{lemma}[Boundary symmetry]\label{lemm:boundary symmetry}If $A((\partial M_0)_i)=(\partial M_0)_j$  for some $1\le i,j
\le b$ ($i$ can be identical to $j$), then the Gauss map $\nu$ of $M_0$ satisfies
\begin{align} \label{boundary symmetry}
    \nu\circ A(p) = A\circ \nu(p)
\end{align}
        for all $p\in (\partial M_0)_i$.
\end{lemma}
\begin{proof}
We may assume $\nu$ satisfies the property in Proposition \ref{prop:good Gauss map}. Since $A$ is an isometry of $\mathbb{S}^2$, we have $A(D_i)=D_j$. Then, $A$ maps the outward unit conormal vector of $D_i$ along $\partial D_i$ at $p$ into the outward unit conormal vector of $D_j$ along $\partial D_j$ at $A(p)$. Thus, we obtain (\ref{boundary symmetry}).  
\end{proof}

\section{Conditions with congruent boundary components}\label{sec:congruent}
We give a proof of Theorem \ref{thm:congruent with one symmetry} by combining several results in this paper. The proof is based on the observation about the congruence of the piece of $\Sigma$ in some hemisphere and the piece of $\Sigma$ in the other hemisphere.\\

\textit{Proof of Theorem \ref{thm:congruent with one symmetry}}

We may assume that $f_1=(0,0,1)$ and $f_2=(0,0,-1)$ by (\ref{sum_of_flux2}). Since $(\partial \Sigma)_1$ and $(\partial \Sigma)_2$ are congruent, there is $A\in O(3)$ such that $A((\partial \Sigma)_1)=(\partial \Sigma)_2$. Let $\Sigma_1= \Sigma\cap \{x_3\ge 0\}$ and $\Sigma_2=\Sigma\cap \{x_3\le0\}$. By the symmetry principle (Theorem \ref{thm:symmetry principle}), $A(\Sigma_1)=\Sigma_2$. Furthermore, $A\circ \nu$ is the Gauss map of $\Sigma$ and it has the same direction with $\nu\circ A$ by the boundary symmetry (Lemma \ref{lemm:boundary symmetry}). Thus, for any $p\in \Sigma \cap \{x_3=0\}$, we have
\begin{align}\label{applying A}
    A(p)\in \Sigma\cap \{x_3=0\} \quad \text{and} \quad \nu\circ A(p) = A\circ \nu(p).
\end{align}

Since $A(f_1)=f_2$, $A$ can be represented by $B\circ R_{\{x_3=0\}}$, where $B$ is a map in $O(3)$ that preserves $x_3$-axis and $R_{\{x_3=0\}}$ is the reflection through $\{x_3=0\}$. Note that $B$ is either a rotation about $x_3$-axis or the reflection through a plane that passes through $f_2$ and the origin.

\textbf{Case 1} ($B$ is a rotation about $x_3$-axis).\\
Let $B$ be the rotation about $x_3$-axis by an angle $\theta$. Let $p\in \Sigma \cap \{x_3=0\}$ and $\nu(p)=(x,y,z)$. Refer to Figure \ref{fig:congruent with one symmetry} for a basic strategy for a specific case.

If $B$ is an irrational rotation, the orbit of $p$ under $A$ is dense in the circle in $\{x_3=0\}$ of radius $|p|$, centered at the origin. By (\ref{applying A}), the orbit of $p$ is contained in $\Sigma\cap \{x_3=0\}$, which implies the circle is identical with $\Sigma \cap \{x_3=0\}$ by the transversality of $\Sigma$ with $\{x_3=0\}$ (Corollary \ref{cor: transversality}) and the two-piece property (Theorem \ref{thm:two-piece property}). In addition, $x_3$-coordinates of its Gauss map image are either $z$ or $-z$. Note that $\{\left.q\in \{A^n(p), n\in \mathbb{N}\}\right|\nu(q)\cdot (0,0,1)=z\}$ and $\{\left.r\in \{A^n(p), n\in \mathbb{N}\}\right|\nu(r)\cdot (0,0,1)=-z\}$ are dense in the circle. By the continuity of $\nu$, $z=0$ and the Gauss map image of the circle lies in $\{x_3=0\}$. By Corollary \ref{cor:half-ball}, $\Sigma$ is congruent to the critical catenoid.   

Now, we may assume $\theta=\frac{b}{a}\cdot 2\pi$, where $a$ and $b$ are relatively prime.

If $a$ is odd, applying (\ref{applying A}), we obtain $\nu(p)=\nu\circ A^a(p)=A^a\circ \nu(p)=(x,y,-z)$. Then, $z=0$ (see Figure \ref{fig:congruent with one symmetry}). Therefore, by Corollary \ref{cor:half-ball}, $\Sigma$ is congruent to the critical catenoid. 

If $a=2(2k-1), k\in \mathbb{N}$, applying (\ref{applying A}), we obtain $-p=A^{a/2}(p)\in \Sigma\cap \{x_3=0\}$ and $\nu(-p)=\nu\circ A^{a/2}(p)=A^{a/2}\circ \nu(p)=-\nu(p)$. By the Bj\"{o}rling uniqueness theorem (see \cite[Theorem 1 in p.125]{DHS10MS} or the proof of Theorem \ref{thm:symmetry principle}), $\Sigma$ is invariant under the antipodal map. Then, by Corollary \ref{cor: antipodal symmetry on the boundary}, $\Sigma$ is congruent to the critical catenoid. 

If $a=2^m(2k-1), m, k\in \mathbb{N}$ with $m\ge 2$, we consider the symmetry assumption on $\partial \Sigma$. By Lemma \ref{lemm:reflection planes} and Corollary \ref{cor:boundary condition by planes}, we may assume $\Pi_1=\{x_1=0\}$. Then, by the symmetry principle, $\Sigma$ is invariant under $R_{\{x_1=0\}}$. Now, by (\ref{applying A}), we obtain $\nu(-p)=\nu\circ A^{a/2}(p)=A^{a/2}\circ \nu(p)=(-x,-y,z)$. Then we have $R_{\{x_2=0\}}(p)=R_{\{x_1=0\}}(-p)\in \Sigma \cap \{x_3=0\}$ and $\nu\circ R_{\{x_2=0\}}(p)=\nu\circ R_{\{x_1=0\}}(-p)=R_{\{x_1=0\}}\circ \nu(-p)=R_{\{x_1=0\}}(-x,-y,z)= R_{\{x_2=0\}}\circ \nu(p)$. Then, by the Bj\"{o}rling uniqueness theorem, $\Sigma$ is also invariant under $R_{\{x_2=0\}}$. Thus, by Theorem \ref{thm:interior symmetry}, $\Sigma$ is congruent to the critical catenoid.   

\begin{figure}
\subfigure[]{
    \begin{tikzpicture}
\draw (0,0) ellipse (2 and 1);
\node at (2.5, -1) {$\Sigma \cap \{x_3=0\}$};

\fill [black] ($(0,0)+(30:2 and 1)$) circle (1.5pt) node[anchor=north east] {$p$};
\fill [black] ($(0,0)+(150:2 and 1)$) circle (1.5pt) node[anchor=north west]{$A(p)$};
\fill [black] ($(0,0)+(270:2 and 1)$) circle (1.5pt) node[anchor=north]{$A^{2}(p)$};

\draw[->, blue, thick] ($(0,0)+(30:2 and 1)$) --($(0,0.5)+(30:2 and 1)$);

\node[anchor=west, blue] at ($(0,0.25)+(30:2 and 1)$){$\nu(p)$};
\end{tikzpicture}
}
\subfigure[]{
\begin{tikzpicture}
\draw (0,0) ellipse (2 and 1);
\node at (2.5, -1) {$\Sigma \cap \{x_3=0\}$};

\fill [black] ($(0,0)+(30:2 and 1)$) circle (1.5pt) ;
\fill [black] ($(0,0)+(150:2 and 1)$) circle (1.5pt) node[anchor=north west, blue]{$\nu\circ A(p)$};
\fill [black] ($(0,0)+(270:2 and 1)$) circle (1.5pt) node[anchor=north, blue]{$\nu\circ A^{2}(p)$};

\draw[->, blue, thick] ($(0,0)+(30:2 and 1)$) --($(0,0.5)+(30:2 and 1)$);
\node[anchor=west, blue] at ($(0,0.25)+(30:2 and 1)$){$\nu(p)$};
\draw[->, blue, thick] ($(0,0)+(150:2 and 1)$) --($(0,-0.5)+(150:2 and 1)$);
\draw[->, blue, thick] ($(0,0)+(270:2 and 1)$) --($(0,0.5)+(270:2 and 1)$);
\draw[->, blue, thick, dotted] ($(0,0)+(30:2 and 1)$) --($(0,-0.5)+(30:2 and 1)$);
\end{tikzpicture}
}
    \caption{Illustrations describing an argument for $B=\mathbb{Z}_3$ about $x_3$-axis in Case 1 in the proof of Theorem \ref{thm:congruent with one symmetry}. We show that $(\nu(p))_3=0$ at $p\in \Sigma\cap \{x_3=0\}$, where $(v)_3$ is the $x_3$-coordinate of $v\in \mathbb{R}^3$. Suppose not and assume $(\nu(p))_3>0$. By (\ref{applying A}), $A=B\circ R_{\{x_3=0\}}$ satisfies $A(p),A^2(p)\in \Sigma\cap \{x_3=0\}$ (see the first picture). In addition, we have $((\nu\circ A)(p))_3<0, ((\nu\circ A^2)(p))_3>0$, and $((\nu\circ A^3)(p))_3=\nu(p)<0$, which contradicts our assumption (see the second picture). It implies that $\Sigma$ intersects $\{x_3=0\}$ perpendicularly. Therefore, by Corollary \ref{cor:half-ball}, $\Sigma$ is congruent to the critical catenoid.}
    \label{fig:congruent with one symmetry}
\end{figure}
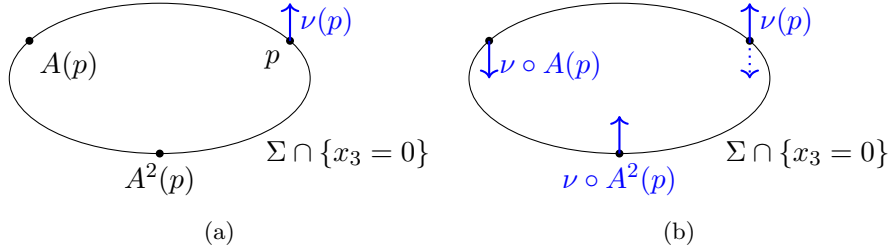

\textbf{Case 2} ($B$ is the reflection through a plane passing through $f_2$ and the origin).\\
Let $\Pi_1$ be the reflection plane of $B$ and $\Pi_2$ be the reflection plane in the assumption on $\partial \Sigma$. Using Lemma \ref{lemm:reflection planes} and Corollary \ref{cor:boundary condition by planes}, we may assume $\Pi_2:=\{x_1=0\}$. Then, $R_{\Pi_1}\circ R_{\Pi_2}$ is a rotation about $x_3$-axis. Then, we define $B':=R_{\Pi_1}\circ R_{\Pi_2}=B\circ R_{\Pi_2}$ and $A':=B'\circ R_{\{x_3=0\}}=A\circ R_{\Pi_2}$. Then (\ref{applying A}) holds for $A'$ instead of $A$, because the symmetry principle and the boundary symmetry (see Lemma \ref{lemm:boundary symmetry}) give $A'(p)=B'\circ R_{\{x_3=0\}}(p)=B\circ R_{\Pi_2}(p)=A\circ R_{\Pi_2}(p)\in \Sigma\cap \{x_3=0\}$ and $\nu\circ A'(p)=\nu\circ A\circ R_{\Pi_2}(p)=(A\circ \nu)\circ R_{\Pi_2}(p)=A\circ (R_{\Pi_2}\circ \nu)(p)=B\circ (R_{\Pi_2}\circ R_{\{x_3=0\}})\circ \nu(p)=A'\circ \nu(p)$. Thus, Case 2 reduces to Case 1.  

Therefore, by case study, we showed that $\Sigma$ is congruent to the critical catenoid. \qed

The following corollary is straightforward from Case 1 in the proof of Theorem \ref{thm:congruent with one symmetry}.

\begin{corollary} \label{cor: rotoreflection}
Let $A:=B\circ R\in O(3)$ be a rotoreflection symmetry, where $R$ is the reflection through a plane $\Pi$ with $R(f_1)=f_2$, and $B$ is a rotation about the axis perpendicular to $\Pi$. Assume $B$ is either
\begin{enumerate}
    \item an irrational rotation, or
    \item a rotation by an angle $\theta:=\frac{b}{a}\cdot \pi$, where $a$ is an odd number and $a$ and $b$ are relatively prime.
\end{enumerate}
If $\partial \Sigma$ is invariant under $A$, then $\Sigma$ is congruent to the critical catenoid.
\end{corollary}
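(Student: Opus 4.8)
The plan is to reduce the statement to Corollary~\ref{cor: antipodal symmetry on the boundary} and Theorem~\ref{thm:half-ball}, imitating the proof of Theorem~\ref{thm:congruent with one symmetry}. Fix coordinates so that $\Pi=\{x_3=0\}$ and $B$ is a rotation of the $x_3$-axis, and write $R=R_{\{x_3=0\}}$; since $B$ and $R$ commute, $A^{n}=B^{n}\circ R^{n}$ for every $n$. \emph{Case (2).} Here $a$ is odd, so $A^{a}=B^{a}\circ R$, and $B^{a}$ is the rotation of the $x_3$-axis by $a\theta=b\pi$, which is the rotation by $\pi$ because $b$ is odd. The rotation by $\pi$ about an axis and the reflection through the perpendicular plane commute, and their product is the antipodal map; hence $A^{a}$ is the antipodal map. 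Since $\partial\Sigma$ is invariant under $A$, it is invariant under $A^{a}$, i.e. under the antipodal map, and Corollary~\ref{cor: antipodal symmetry on the boundary} finishes this case. (This argument does not even use $\Pi\cap\partial\Sigma=\phi$.)

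\emph{Case (1): $B$ an irrational rotation.} The first step is to bring the configuration into the standard position $f_1=(0,0,1)$, $f_2=(0,0,-1)$ used in the proof of Theorem~\ref{thm:congruent with one symmetry}. Because $\Pi\cap\partial\Sigma=\phi$, the two-piece property (Theorem~\ref{thm:two-piece property}) forces $(\partial\Sigma)_1$ and $(\partial\Sigma)_2$ to lie on opposite sides of $\Pi$: if both lay on one side, the other component of $\Sigma\setminus\Pi$ would be a compact minimal surface with boundary contained in $\Pi$, hence planar, which is impossible. As $B$ preserves each side of $\Pi$ while $R$ interchanges them, $A$ interchanges the two boundary components; since $f_i$ is the normalized flux of $(\partial\Sigma)_i$ and $A$ is a linear isometry, $A(f_1)=f_2=-f_1$ by (\ref{sum_of_flux2}). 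In the identity $A(f_1)=-f_1$ the $x_3$-components agree automatically, whereas the horizontal parts show that the planar rotation $B$ sends the horizontal part of $f_1$ to its negative; as $B$ is not the rotation by $\pi$, the horizontal part of $f_1$ vanishes, so, after possibly replacing $e_3$ by $-e_3$, we may assume $f_1=(0,0,1)$.

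Now the symmetry principle (Theorem~\ref{thm:symmetry principle}) gives $A(\Sigma)=\Sigma$, and, exactly as in the proof of Theorem~\ref{thm:congruent with one symmetry}, the boundary symmetry (Lemma~\ref{lemm:boundary symmetry}) upgrades this to $g\circ A=A\circ g$ on all of $\Sigma$ for a suitably chosen Gauss map $g$; in particular (\ref{applying A}) holds for every $p\in\Sigma\cap\{x_3=0\}$, where $A$ restricts to the irrational rotation $B$. Choose $p\in\Sigma\cap\{x_3=0\}$ with $p\neq o$, which is possible because $\Sigma\cap\{x_3=0\}$ is a one-dimensional submanifold by transversality (Corollary~\ref{cor: transversality}) and $o\notin\Sigma$ (see the proof of Proposition~\ref{prop:good Gauss map}). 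Then $\{A^{n}(p)\}_{n\ge 0}$ is dense in the circle $\mathcal{O}:=\{x_3=0,\ |x|=|p|\}$ and, by (\ref{applying A}), is contained in $\Sigma\cap\{x_3=0\}$; by transversality and the two-piece property, $\Sigma\cap\{x_3=0\}$ is a single embedded circle, hence equal to $\mathcal{O}$. Since $A$ reverses the $x_3$-coordinate of every vector, $\langle g(A^{n}(p)),e_3\rangle=(-1)^{n}\langle g(p),e_3\rangle$, and the even and the odd sub-orbits are each dense in $\mathcal{O}$, so continuity of $g$ forces $\langle g,e_3\rangle\equiv 0$ along $\mathcal{O}$; that is, $\Sigma$ meets $\{x_3=0\}$ orthogonally along $\mathcal{O}$. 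Consequently $\Sigma\cap\{x_3\ge 0\}$ is an embedded free boundary minimal annulus in $\mathbb{B}^{3}\cap\{x_3\ge 0\}$ one of whose boundary components, $(\partial\Sigma)_1$, lies in the open hemisphere (by Proposition~\ref{prop:interior}, $f_1\in\text{Int}(D_1)$, which rules out $(\partial\Sigma)_1\subset\{x_3<0\}$) while the other, $\mathcal{O}$, lies in the equatorial disk; Theorem~\ref{thm:half-ball} then identifies it with $\Sigma_c\cap\{x_3\ge 0\}$, whence $\Sigma$ is congruent to the critical catenoid.

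The only step with genuine content is the standard-position reduction in Case (1), namely forcing $f_1$ onto the rotation axis of $B$; this is exactly where the hypotheses $\Pi\cap\partial\Sigma=\phi$ (which confines each $(\partial\Sigma)_i$ strictly to one side of $\Pi$) and $f_i\in\text{Int}(D_i)$ (Proposition~\ref{prop:interior}) are used. Once this is in place, Case (1) is the irrational-rotation argument already carried out in the proof of Theorem~\ref{thm:congruent with one symmetry}, and Case (2) is immediate once one observes that $A^{a}$ is the antipodal map.
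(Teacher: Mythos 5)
Your proof is correct, and for case (1) it is essentially the paper's argument: the hypothesis $\Pi\cap\partial\Sigma=\phi$ is used to force $f_1$ onto the rotation axis, after which the dense-orbit/Gauss-map computation and the appeal to Theorem \ref{thm:half-ball} are exactly Case 1 of the proof of Theorem \ref{thm:congruent with one symmetry}. Your route to the standard position differs slightly in flavor from the paper's: the paper rules out $R(f_1)=f_1$ by noting that then the great circle $\Pi\cap\mathbb{S}^2$ would pass through the interior point $f_1$ of $D_1$ (Proposition \ref{prop:interior}) and hence meet $(\partial\Sigma)_1$, while you use the two-piece property plus the maximum principle to put the two boundary components on opposite sides of $\Pi$, so that $A$ swaps them and $A(f_1)=-f_1$ by (\ref{sum_of_flux2}); both are sound. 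Where you genuinely diverge is case (2): you observe directly that, since $a$ and $b$ are odd, $A^{a}=B^{a}\circ R$ is the antipodal map, so $\partial\Sigma$ is antipodally invariant and Corollary \ref{cor: antipodal symmetry on the boundary} applies immediately. The paper instead routes this case, like case (1), through the subcase $a=2(2k-1)$ of Theorem \ref{thm:congruent with one symmetry}, which re-derives the antipodal invariance of $\Sigma$ via the identity (\ref{applying A}) and the Bj\"{o}rling uniqueness theorem. Your shortcut is more economical, avoids the Gauss-map and Bj\"{o}rling machinery entirely for that case, and, as you correctly point out, does not even use the hypothesis $\Pi\cap\partial\Sigma=\phi$ there.
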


Note that $A$ becomes the antipodal map if $B$ is the rotation by an angle $\pi$, so Corollary \ref{cor: rotoreflection} is a generalization of Corollary \ref{cor: antipodal symmetry on the boundary}.

\section{Uniqueness of the critical catenoid}\label{sec: uniqueness of the critical catenoid}
In this section, we summarize our sufficient conditions for $\Sigma$ to be the critical catenoid. 

\textbf{A Condition on a component of $\partial \Sigma$}
\begin{outline}
    \1 The reflection symmetries through two distinct planes (Corollary \ref{cor:two plane with one component}).
\end{outline}

\textbf{Conditions on $\partial \Sigma$}
\begin{outline}
    \1 The reflection symmetries through two distinct planes (Corollary \ref{cor:boundary condition by planes}).
    \1 The reflection symmetry through a plane with additional conditions.
        \2 The reflection plane $\Pi$ does not meet $\partial \Sigma$ (Corollary \ref{cor:boundary condition by planes}).
        \2 The reflection plane $\Pi$ intersects $\partial \Sigma$ and the two components of $\partial \Sigma$ are congruent (Theorem \ref{thm:congruent with one symmetry}).
    \1  The rotoreflection symmetry by $A:=B\circ R\in O(3)$, where $R$ is the reflection through a plane $\Pi$ with $R(f_1)=f_2$, and $B$ is a rotation about the axis perpendicular to $\Pi$.
        \2 $B$ is an irrational rotation (Corollary \ref{cor: rotoreflection}).
        \2 $B$ is a rotation by an angle $\theta:=\frac{b}{a}\cdot \pi$, where $a$ is an odd number and $a$ and $b$ are relatively prime (Corollary \ref{cor: rotoreflection}). 
\end{outline}

By the symmetry principle (Theorem \ref{thm:symmetry principle}), the following conditions are equivalent to the conditions on $\partial \Sigma$.

\textbf{Conditions on $\Sigma$}
\begin{outline}
    \1 The reflection symmetries through two distinct planes.
    \1 The reflection symmetry through a plane with an additional condition.
        \2 The reflection plane $\Pi$ does not meet $\partial \Sigma$.
        \2 The reflection plane $\Pi$ intersects $\partial \Sigma$ and the two components of $\partial \Sigma$ are congruent.
    \1  The rotoreflection symmetry by $A:=B\circ R\in O(3)$, where $R$ is the reflection through a plane $\Pi$ with $R(f_1)=f_2$, and $B$ is a rotation about the axis perpendicular to $\Pi$.
        \2 $B$ is an irrational rotation.
        \2 $B$ is a rotation by an angle $\theta:=\frac{b}{a}\cdot \pi$, where $a$ is an odd number and $a$ and $b$ are relatively prime. 
\end{outline}

\bibliographystyle{amsplain}
\bibliography{annot}

\end{document}